\newtheorem{thm}{Theorem}[section]
\newtheorem{cor}[thm]{Corollary}
\newtheorem{lem}[thm]{Lemma}
\newtheorem{prop}[thm]{Proposition}
\theoremstyle{definition}
\newtheorem{defn}[thm]{Definition}
\theoremstyle{remark}
\newtheorem{rem}[thm]{Remark}
\numberwithin{equation}{section}
\newcommand{\al}{\alpha}
\newcommand{\be}{\beta}
\newcommand{\ga}{\gamma}
\newcommand{\de}{\delta}
\newcommand{\ep}{\varepsilon}
\newcommand{\ka}{\kappa}
\newcommand{\la}{\lambda}
\newcommand{\si}{\sigma}
\newcommand{\csi}{\xi}
\newcommand{\x}{\times}
\newcommand{\s}{\mathbf s}
\newcommand{\CC}{\mathcal C}
\newcommand{\f}{{\mathfrak {f}}}
\renewcommand{\SS}{{\mathfrak {F}}}
\renewcommand{\s}{{\mathfrak {s}}}
\newcommand{\ii}{{\rm II}}
\newcommand{\iii}{{\rm III}}
\newcommand{\imm}{{\mathrm {Imm}}}
\newcommand{\DIFF}{{\mathrm {DIFF}}}
\newcommand{\AUT}{{\mathrm {AUT}}}
\newcommand{\Z}{\mathbb Z}
\newcommand{\Q}{\mathbb Q}
\newcommand{\R}{\mathbb R}
\newcommand{\RP}{{\mathbb R}{P}}
\newcommand{\CP}{{\mathbb C}{P}}
\newcommand{\del}{\partial}
\newcommand{\co}{\colon\thinspace}
\newcommand{\rank}[1]{{\mathrm {rank}}\thinspace {#1}}
\newcommand{\fr}{{\mathrm {fr}}}
\newcommand{\plto}[1]
 {\xrightarrow{#1}}
\begin{document}
\mathsurround=1pt 
\title{Fold maps and immersions from the viewpoint of cobordism}

\thanks{The author was supported by Canon Foundation in Europe and has been supported
by JSPS}

\subjclass[2000]{Primary 57R45; Secondary 57R75, 57R42, 55Q45}

\keywords{Fold singularity, immersion, cobordism, simple fold map, stable homotopy group}

\author{Boldizs\'ar Kalm\'{a}r}

\address{Faculty of Mathematics, Kyushu University, 6-10-1 Hakozaki, Higashi-ku, Fukuoka 812-8581, Japan}
\email{kalmbold@cs.elte.hu}

\begin{abstract}
We obtain complete geometric
invariants of cobordism classes of
oriented simple
fold maps of $(n+1)$-dimensional manifolds into an $n$-dimensional manifold $N^n$ 
in terms of immersions
with prescribed normal bundles.
We compute that for $N^n=\R^n$ the cobordism group of simple fold maps
is isomorphic to the 
direct sum of the $(n-1)$th
stable homotopy group of spheres and the $(n-1)$th stable homotopy group of the space $\RP^{\infty}$.
By using geometric invariants defined in the author's earlier works, 
we also describe the natural map of the simple fold cobordism group
to the fold cobordism group by natural homomorphisms between cobordism
groups of immersions. We also compute the ranks of the oriented (right-left) bordism groups of simple fold maps.
\end{abstract}

\maketitle

%\begin{spacing}{1.3}

\section*{Introduction}\label{s:intro}
Fold maps of $(n+1)$-dimensional manifolds into $n$-dimensional manifolds 
are smooth singular maps which have the formula 
\[
f(x_1, \ldots, x_{n+1})=(x_1, \ldots, x_{n-1}, x_n^2 \pm x_{n+1}^2 )
\]
as a local normal form around each singular point.
Fold maps can be considered as the natural generalizations of Morse functions. 
 Let $f \co Q^{n+1} \to N^n$ be a fold map.
The set of singular points of the fold map $f$ is a two codimensional
smooth submanifold in the source manifold $Q^{n+1}$
and the fold map $f$ restricted to its singular points is a one codimensional immersion
into the target manifold $N^n$.
This immersion together with more detailed informations about the neighbourhood of the set of
singular points in the source manifold $Q^{n+1}$ can be used as a geometric invariant
 (see \cite{Kal6} and Section~\ref{foldgerms} of the present paper)
of fold cobordism classes (see Definition~\ref{cobdef}) of fold maps, and
by this way we obtain a geometric relation between fold maps and immersions via cobordisms.
%Fold maps are close to submersions regarding their relatively 
%simple singular set, and a purpose of the present paper is to show that in some sense they are close 
%to immersions as well.
In \cite{Kal4, Kal6}, we defined these invariants for negative codimensional\footnote{
If we have a map $f \co M^m \to P^p$ of an $m$-dimensional manifold
into a $p$-dimensional manifold, then the {\it codimension} of the map $f$
is the integer $p-m$.}  fold maps in full generality, and we used them to detect direct summands of the cobordism groups of negative codimensional fold maps.
In this paper, we study these invariants in the case of $-1$ codimensional fold maps with some additional restrictions about their  
singular fibers (for singular fibers, see \cite{Lev, Sa}).

{\it Simple fold maps} are fold maps with at most one 
singular point in each connected component of a singular fiber.
From this definition it follows that the only possible singular fibers 
whose singular points have sign ``$-$'' in the above normal form
are the disjoint unions of a finite number of ``figure eight''
singular fibers and circle components, provided that the source manifold is orientable or
the simple fold map is {\it oriented} 
(there is a consistent orientation of all fibers at their regular points). 
Simple fold maps have been studied, for example, by
Levine \cite{Lev}, 
Saeki \cite{Sa1, Sa2, Sa4},
Sakuma \cite{Saku} and 
Yonebayashi \cite{Yo}. 
The existence of a simple fold map on a manifold gives strong conditions about the structure of the manifold (for 
example, see the existence of simple fold maps on orientable $3$-manifolds \cite{Sa4}).
If we have a simple fold map 
%$f \co Q^{n+1} \to N^n$ 
of an oriented manifold 
%$Q^{n+1}$
or an oriented simple fold map, then 
the immersion of the singular set has trivial normal bundle in the target manifold
$N^n$, moreover there is a canonical trivialization corresponding to the number of
regular fiber components in a neighbourhood of a singular fiber.

The main result of this paper is that our geometric invariants describe completely
the set of cobordism classes of oriented simple fold maps of $(n+1)$-dimensional manifolds 
into an $n$-dimensional manifold $N^n$.
By using Pontryagin-Thom type construction, we prove that
the cobordism classes of oriented simple fold maps of $(n+1)$-dimensional 
manifolds into an $n$-dimensional manifold $N^n$ are in a natural bijection
with the set of stable homotopy classes of continuous maps of the one
point compactification of the manifold $N^n$ into the Thom-space of the trivial line bundle over the
space $\RP^{\infty}$. As a special case, we obtain that the
oriented cobordism group of simple fold
maps of oriented ${(n+1)}$-manifolds into $\R^n$ is isomorphic 
to the $n$th
stable homotopy group of the space $S^1 \vee S\RP^{\infty}$.
We also describe the natural homomorphism which maps a simple fold cobordism class 
to its fold cobordism class in terms of natural homomorphisms between cobordism groups
of immersions with prescribed normal bundles. In this way, we obtain results about 
the ``inclusion'' of the simple fold maps into the cobordism group of fold maps.
We have the analogous results about {\it bordisms} (see Definition~\ref{borddef}) of fold maps as well.

The paper is organized as follows.
In Section~\ref{s:jelol} we give several basic definitions and notations.
In Section~\ref{mainthms} we state our main results.
%In Section~\ref{bundlestr} we prove theorems about the bundle structures of fold maps.
%In Section~\ref{puncmaps} we give special representatives of fold cobordism classes.
In Section~\ref{mainthmproof} we prove our main theorems.
In Section~\ref{specesetek} we give explicit descriptions of the ``inclusion'' of the simple
fold cobordism groups into the fold cobordism groups in low dimensions.
In Section~\ref{bord} we give analogous results about bordism groups of simple fold maps.
In Appendix we prove theorems about the singular fibers of simple fold maps.

The author would like to thank Prof. Andr\'as Sz\H{u}cs 
for the uncountable discussions %about Pontryagin-Thom construction, which also plays a key-role in the present paper, 
and suggestions,
and Prof. Osamu Saeki for giving
useful observations and correcting the appearance of this paper.

\subsection*{Notations}
In this paper the symbol ``$\amalg$'' denotes the disjoint union, 
$\ga^k$ denotes the universal $k$-dimensional real vector bundle over $BO(k)$,
$\ep^k_X$ (or $\ep^k$) denotes the trivial $k$-plane bundle over the space $X$ (resp.\ over the point),
and the symbols $\csi^k$, $\eta^k$, etc. usually denote $k$-dimensional real vector bundles.
The symbols det$\csi^k$ and $T\csi^k$ denote the determinant line bundle 
and the Thom space of the bundle $\csi^k$, respectively.
The symbol $\imm^{\csi^k}_{N}(n-k,k)$ denotes
the cobordism group of $k$-codimensional immersions into 
an $n$-dimensional manifold $N$
whose normal bundles are induced from $\csi^k$ (this
group is isomorphic to the group $\{\dot N, T\csi^k \}$ \cite{We}, where
$\dot N$ denotes the one point compactification of the manifold $N$
and the symbol $\{X,Y \}$ denotes the group of stable homotopy classes of continuous
maps from the space $X$ to the space $Y$). 
In the case of $N^n = \R^n$ or $\csi^k = \ga^k$ we usually omit the symbols
$N$ or $\csi^k$ from the notation $\imm^{\csi^k}_{N}(n-k,k)$, respectively.
The symbol $\pi_n^s(X)$ (or $\pi_n^s$) denotes the $n$th stable homotopy group of the space $X$ (resp. spheres).
The symbol ``id$_A$'' denotes the identity map of the space $A$.
The symbol $\ep$ denotes a small positive number, and $\Omega_n$
denotes the oriented cobordism group of closed oriented $n$-dimensional manifolds.
All manifolds and maps are smooth of class $C^{\infty}$.

\section{Simple fold maps, fold cobordisms and geometric invariants}\label{s:jelol}

\subsection{Simple fold maps}
 
Let $Q^{n+1}$ and $N^n$ be smooth manifolds of dimensions $n+1$ and $n$ 
respectively. Let $p \in Q^{n+1}$ be a singular point of 
a smooth map $f \co Q^{n+1} \to N^{n}$. The smooth map $f$  has a {\it fold 
singularity} at the singular point $p$ if we can write $f$ in some local coordinates around $p$  
and $f(p)$ in the form 
\[  
f(x_1,\ldots,x_{n+1})=(x_1,\ldots,x_{n-1}, x_n^2 \pm x_{n+1}^2).
\] 
A smooth map $f \co Q^{n+1} \to N^{n}$ is called a {\it fold map} if $f$ has only 
fold singularities.

Singularities with the sign ``$+$'' or ``$-$'' in the above normal form are
called {\it definite} or {\it indefinite} fold singularities, respectively.

Let $S_1(f)$ and $S_0(f)$ denote the set of indefinite and definite fold singularities of $f$ in $Q^{n+1}$, respectively.
 Let $S_f$ denote the set $S_0(f) \cup S_1(f)$.
Note that the set $S_f$ is an ${(n-1)}$-dimensional submanifold of the manifold
$Q^{n+1}$.

If $f \co Q^{n+1} \to N^n$ is a fold map in general position, then 
the map $f$
restricted to the singular set $S_f$ is a general position
 codimension one immersion  into the target manifold $N^n$.

Since every fold map is in general position after a small perturbation, 
and we study maps under the equivalence relations {\it cobordism} and {\it bordism} 
(see Definitions~\ref{cobdef} and \ref{borddef} respectively),
in this paper we can restrict ourselves to studying fold maps which are 
in general position.
Without mentioning we suppose that a fold map $f$ is in general position.

A fold map $f \co Q^{n+1} \to N^{n}$ is called a {\it simple fold map} if every connected 
component of an arbitrary fiber of the map $f$ contains at most one singular point. 

A fold map $f \co Q^{n+1} \to N^{n}$ is called {\it framed} if the immersion $f|_{S_1(f)}$ of its indefinite
fold singular set is framed, i.e., its normal bundle is trivialized.

A fold map $f \co Q^{n+1} \to N^{n}$ is called {\it oriented}
if there is a chosen consistent orientation of all fibers at their regular points (for example, 
in the case of oriented source and target manifolds).

From the definition of simple fold maps it follows that an oriented simple fold map %$f \co Q^{n+1} \to N^n$
or a simple fold map of an oriented manifold %$Q^{n+1}$ %(but also any cooriented simple fold map)
can have only the disjoint union of a finite number of copies of the 
``figure eight'' and circles as an indefinite singular fiber (for singular fibers, see \cite{Lev,Sa}), see Figure~\ref{simplesing}.%, since
%the only indefinite singular fiber with one connected component is
%the ``figure eight'' singular fiber, provided that the source manifold is orientable \cite{Sa}.

\begin{figure}[ht]
\begin{center} 
%\psfrag{a}{${\si_{\infty}}$}
%\psfrag{b}{$\Bar \si_{\infty}$}
%\psfrag{c}{$s_{\infty}$}
%\psfrag{d}{$w_{\infty}$}
%\psfrag{e}{$J$}
\epsfig{file=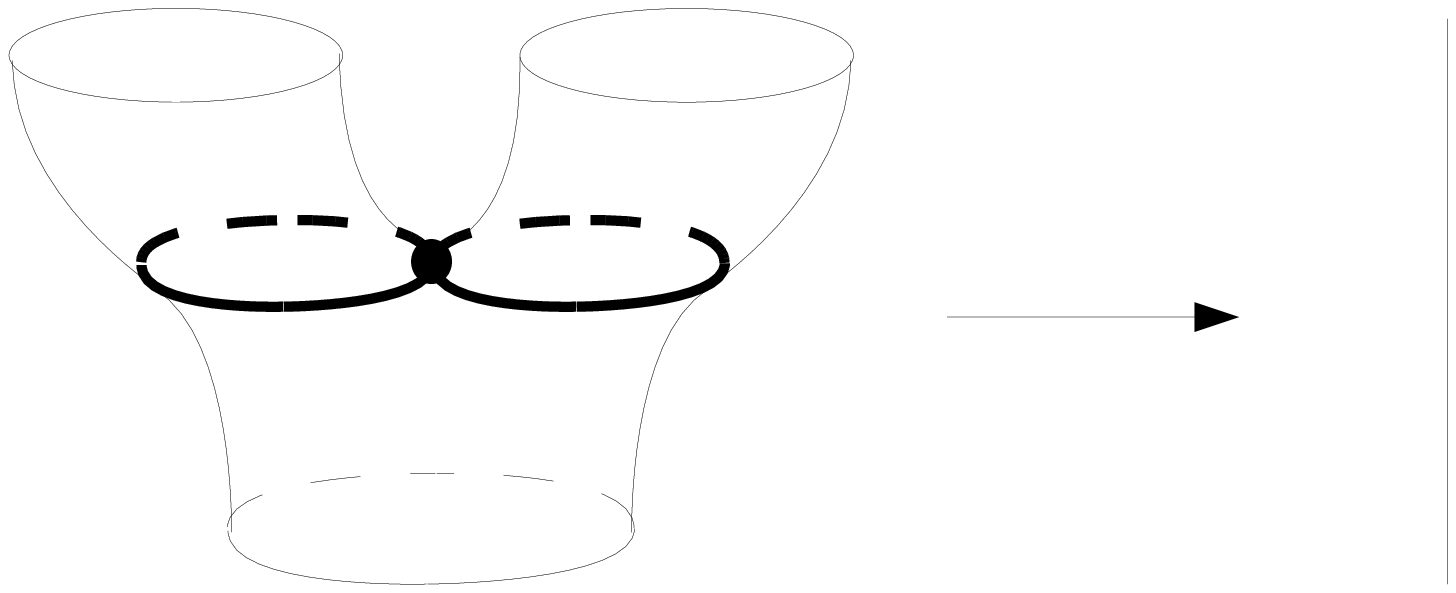, height=5cm} 
\caption{A representative of the singular fiber ``figure eight''.}
\end{center} 
\label{simplesing}  
\end{figure}

Note that an oriented simple fold map is framed in a canonical way since the immersion 
of its indefinite fold singular set has a
canonical trivialization corresponding to the number of
regular fiber components in a neighbourhood of a ``figure eight'' singular fiber.

%For example, let us consider 
%a representative of the sigular fiber ``figure eight'' denoted by $\i^1$ \cite{Sa}, which is
%a proper Morse function on an annulus with one hole, and has an indefinite 
%critical point (which is a saddle-point) such that the corresponding
%singular fiber is equivalent to a ``figure eight'', see Figure~\ref{simplesing}.

%\begin{defn}
%Let $\si_\SS \co s_\SS \to J^{\ka ( \SS )}$ be a fiber-singularity, where the manifold $s_\SS$ is
%orientable. We fix an orientation on $s_\SS$ and give an orientation on the manifold
%$J^{\ka ( \SS )}$ in a standard way as well. Then we say that the fiber-singularity $\si_\SS \co s_\SS \to J^{\ka(\SS)}$ is
%{\it oriented}. In the following, we fix an orientation for each such fiber-singularity.
%\end{defn}

\subsection{Cobordisms of simple fold maps}\label{kob}

%We give a sophisticated definition of
%cobordisms of oriented fold maps from closed $({n+1})$-dimensional manifolds to 
%a given $n$-dimensional manifold $N^n$.
%The definition will be an
%analogue of the notion of ``cobordisms of $\tau$-maps'' for  
%positive codimensional singular maps \cite{RSz}. 

%\begin{defn}
%Let $\tau$ be a set of fiber-singularities.
%Let $f \co Q^{n+1} \to N^n$ be a fold map which has only
%fiber-singularities contained in the set $\tau$.
%Then we call the map $f$ a $\tau$-map.
%\end{defn}

\begin{defn}[Cobordism]\label{cobdef}% (Cobordism) 
%Let $\tau$ be a set of fiber-singularities.
Two oriented fold maps (resp. two oriented simple fold maps) $f_i \co Q_i^{n+1} \to N^n$ $(i=0,1)$  
from closed $({n+1})$-dimensional manifolds $Q_i^{n+1}$ $(i=0,1)$ 
into an $n$-dimensional manifold $N^n$ are  
{\it cobordant} (resp. {\it simple cobordant}) if 
\begin{enumerate}
\item
there exists an oriented (resp.\ simple) fold map 
$F \co X^{n+2} \to N^n \times [0,1]$% with singular fibers contained in the set $\tau$ 
from a compact $(n+2)$-dimensional 
manifold $X^{n+2}$,
\item
$\del X^{n+2} = Q_0^{n+1} \amalg Q_1^{n+1}$,
\item
${F |}_{Q_0^{n+1} \x [0,\ep)}=f_0 \x
{\mathrm {id}}_{[0,\ep)}$ and ${F |}_{Q_1^{n+1} \x (1-\ep,1]}=f_1 \x 
{\mathrm {id}}_{(1-\ep,1]}$, where $Q_0^{n+1} \x [0,\ep)$
 and $Q_1^{n+1} \x (1-\ep,1]$ are small collar neighbourhoods of $\del X^{n+2}$ with the
identifications $Q_0^{n+1} = Q_0^{n+1} \x \{0\}$ and $Q_1^{n+1} = Q_1^{n+1} \x \{1\}$,
\item
the orientations of the fold maps $f_0$, $f_1$ and $F$ under the above identifications are consistent.
\end{enumerate}

We call  the map $F$ a {cobordism} between $f_0$ and $f_1$.
\end{defn} 
This clearly defines an equivalence relation on the set of oriented fold maps (resp. simple fold maps)
from closed $({n+1})$-dimensional manifolds to an  
$n$-dimensional manifold $N^n$. Note that in the case of an oriented target manifold $N^n$
this cobordism relation coincides with the analogous cobordism relation of (simple) fold maps of oriented  
manifolds, where the manifold $X^{n+2}$ should be oriented and satisfy
$\del X^{n+2} = Q_0^{n+1} \amalg (-Q_1^{n+1})$.

Let us denote the cobordism classes of oriented fold maps into an $n$-dimensional manifold $N^n$
by $\CC ob_{\f}(N^n)$ and  
the cobordism classes of oriented simple fold maps into an $n$-dimensional manifold $N^n$
by $\CC ob_{\s}(N^n)$. 
%Let us denote the cobordism classes of $\tau$-maps into
%the Euclidean space $\R^n$ under $\tau$-cobordisms 
%by $\CC ob_{\tau}(n+1,-1)$. 
We define a commutative semigroup operation in the usual way
on the set of cobordism classes $\CC ob_{\f}(N^n)$ and $\CC ob_{\s}(N^n)$
by the disjoint union,
which is a group operation in the case of $N^n=\R^n$.

%If the target manifold $N^n$ is the Euclidean space $\R^n$ (or more
%generally $N^n$ has the form $\R^1 \x M^{n-1}$, for some $(n-1)$-dimensional manifold  
%$M^{n-1}$), then the elements in the semigroup $\CC ob_{N,f}^{(O)}(n+q,-q)$
%have an inverse: namely compose them with a reflection
%in a hyperplane (in $\{0\} \x M^{n-1}$ in general, see \cite{Szucs4}).
%Hence the semigroups $\CC ob_{N,f}^{(O)}(n+q,-q)$ are in this case 
%actually groups.

\begin{defn}
Two oriented framed fold maps $f_i \co Q_i^{n+1} \to N^n$ $(i=0,1)$ are
{\it framed cobordant} if they are oriented cobordant by an oriented framed fold map $F \co X^{n+2} \to N^n \times [0,1]$ in the sense of Definition~\ref{cobdef}, such that the framing of $F$ is compatible with the framings of $f_0$ and
$f_1$. 
\end{defn}

We denote the framed cobordism classes 
 into an $n$-dimensional manifold $N^n$
by $\CC ob_{\f}^{\fr}(N^n)$, which clearly form a commutative semigroup under the disjoint union as operation.

\subsection{Bundles of fold germs}\label{foldgerms}
We summarize some properties
of usual germs $(\R^2,0) \to (\R,0)$.

Let us define the indefinite fold germ $g_{\mathrm {indef}} \co (\R^2,0) \to (\R,0)$ by 
$g_{\mathrm {indef}}(x,y) = x^2 - y^2$ and the definite fold germ
$g_{\mathrm {def}} \co (\R^2,0) \to (\R,0)$ by 
$g_{\mathrm {def}}(x,y) = x^2 + y^2$.

We say that a pair of diffeomorphism germs
$(\al \co (\R^2,0) \to (\R^2,0), \be \co (\R,0) \to (\R,0))$
is an {\it automorphism} of a germ $g \co (\R^2,0) \to (\R,0)$
if the equation $g \circ \al = \be \circ g$ holds.
We will work with bundles whose fibers and structure groups are germs and  
groups of automorphisms of germs, respectively.

If we have a fold map $f \co Q^{n+1} \to N^n$ with non-empty indefinite fold singular set, then
we have the commutative diagram
\begin{center}
\begin{graph}(6,2)
\graphlinecolour{1}\grapharrowtype{2}
\textnode {A}(0.5,1.5){$E(\csi_{\mathrm {indef}}^2(f))$}
\textnode {B}(5.5, 1.5){$E(\eta_{\mathrm {indef}}^1(f))$}
\textnode {C}(3, 0){$S_1(f)$}
\diredge {A}{B}[\graphlinecolour{0}]
\diredge {B}{C}[\graphlinecolour{0}]
\diredge {A}{C}[\graphlinecolour{0}]
\freetext (3,1.8){$E(\csi_{\mathrm {indef}}(f))$}
\freetext (0.7, 0.6){$\csi_{\mathrm {indef}}^2(f)$}
\freetext (5.3,0.6){$\eta_{\mathrm {indef}}^1(f)$}
\end{graph}
\end{center}
i.e., over the indefinite fold singular set $S_1(f)$, we have
\begin{enumerate}
\item
an $(\R^2,0)$ bundle denoted by $\csi_{\mathrm {indef}}^2(f) 
\co E(\csi_{\mathrm {indef}}^2(f)) \to S_1(f)$,
\item
an
$(\R,0)$ bundle denoted by $\eta_{\mathrm {indef}}^1(f)
\co E(\eta_{\mathrm {indef}}^1(f)) \to S_1(f)$ and
\item
a fiberwise map 
$E(\csi_{\mathrm {indef}}(f)) \co 
E(\csi_{\mathrm {indef}}^2(f)) 
 \to  E(\eta_{\mathrm {indef}}^1(f))$, which is equivalent on each fiber to 
 the indefinite germ $g_{\mathrm {indef}}$,
\end{enumerate}
and we say that 
we have an indefinite germ bundle\footnote{Note that total space $E(\csi_{\mathrm {indef}}(f))$ of this bundle is in fact a map, and is not a set.} 
over the indefinite singular set $S_1(f)$ 
denoted by $\csi_{\mathrm {indef}}(f) \co E(\csi_{\mathrm {indef}}(f)) \to S_1(f)$.

The {\it fiber} of the indefinite germ bundle $\csi_{\mathrm {indef}}(f)$ is
the indefinite germ $g_{\mathrm {indef}}$,
 the {\it base space} of the bundle $\csi_{\mathrm {indef}}(f)$ is the indefinite singular set $S_1(f)$
 and the {\it total space} of the bundle $\csi_{\mathrm {indef}}(f)$ is the fiberwise map 
 $E(\csi_{\mathrm {indef}}(f)) \co 
E(\csi_{\mathrm {indef}}^2(f)) 
 \to  E(\eta_{\mathrm {indef}}^1(f))$ between the total spaces of the bundles 
$\csi_{\mathrm {indef}}^2(f)$ and $\eta_{\mathrm {indef}}^1(f)$. We call 
the total space $E(\eta_{\mathrm {indef}}^1(f))$
the {\it target of the total space $E(\csi_{\mathrm {indef}}(f))$} of the indefinite germ bundle 
$\csi_{\mathrm {indef}}(f)$
and we call the bundle $\eta_{\mathrm {indef}}^1(f)$ the {\it target of the indefinite germ bundle 
$\csi_{\mathrm {indef}}(f)$}.

 By \cite{Jan, Szucs3, Wa} this bundle $\csi_{\mathrm {indef}}(f)$ is a locally trivial bundle 
 in a sense with a fiber $g_{\mathrm {indef}}$
 and an appropriate group of automorphisms
$(\al \co (\R^2,0) \to (\R^2,0), \be \co (\R,0) \to (\R,0))$ as structure group.
 The same holds for the definite singular set $S_0(f)$ and the definite germ
 $g_{\mathrm {def}}(x,y) = x^2 + y^2$.
 
Moreover, if we have an oriented fold map $f \co Q^{n+1} \to N^n$, 
%of an oriented manifold $Q^{n+1}$ into an oriented manifold $N^n$, 
then 
the elements of the structure groups of the above indefinite and definite 
germ bundles are automorphisms
$(\al \co (\R^2,0) \to (\R^2,0), \be \co (\R,0) \to (\R,0))$ 
where the diffeomorphisms $\al$ and $\be$
both preserve or both reverse the orientation, or in other words
they keep fixed a chosen consistent orientation at the regular points of the level curves.

%Therefore, we are interested in the automorphisms $(\al \co (\R^2,0) \to (\R^2,0), \be \co (\R,0) \to (\R,0))$ 
%of the germs $g_{\mathrm {indef}}$ and $g_{\mathrm {def}}$ such that the diffeomorphisms $\al$ and $\be$
%both preserve or both reverse the orientation.

If we have an indefinite germ bundle, then by \cite{Jan, Wa}
its structure group can be reduced to a maximal compact subgroup, namely to the dihedral group 
of order $8$ generated by the automorphisms $((x,y) \mapsto (x,-y), $id$_\R)$ and $((x,y) \mapsto (-y,x), -$id$_\R)$.
Furthermore, the automorphisms which keep fixed a chosen consistent orientation 
of the level curves of the indefinite germ
form a subgroup $\Z_2 \oplus \Z_2$, which is 
generated by $((x,y) \mapsto (y,x), -$id$_\R)$ and $((x,y) \mapsto (-y,-x), -$id$_\R)$.
In this group, we have the subgroup $\Z_2$ generated by
the automorphism $((x,y) \mapsto (-x,-y),\ $id$_\R)$.

If we have an oriented simple fold map 
 (or an oriented framed fold map),
%between oriented manifolds, 
then
the structure group of its indefinite germ bundle 
is reduced to this subgroup $\Z_2$, since other elements of $\Z_2 \oplus \Z_2$
cannot be extended to the singular fiber ``figure eight'' (resp. do not keep the trivialization of the 
target of the total space of the indefinite germ bundle).
%Hence, the immersion of the indefinite singular set $S_1(f)$ has a trivial normal bundle
%according to the orientation of the target $(\R,0)$ of the indefinite germ.
Note that in the case of an oriented fold map without any framing in general this structure group can be more complicated than
$\Z_2$, see, for example, the singular fiber denoted by $\ii^3$ in \cite{Sa}\footnote{We note that it would not be 
very difficult to extend some of our results to oriented fold maps in an analogous way.
The main difference would be that since oriented fold maps can have more complicated singular fibers (e.g. the
singular fiber denoted by $\ii^3$ in \cite{Sa}), the symmetry group of the indefinite fold germ
$(x,y) \mapsto x^2 - y^2$ in the case of oriented fold maps is more complicated 
than in the case of oriented simple fold maps or oriented framed fold maps 
(where it is generated by the automorphism
$(x,y) \mapsto (-x,-y)$).}.

The automorphism group of the definite germ bundle of an oriented fold map
 can be reduced to a group whose elements
are of the form $((x,y) \to T(x,y),\ $id$_\R)$, where $T$ is an element 
of the group $SO(2)$. 

Now it follows that the targets of the universal oriented indefinite and definite germ bundles
are the line bundle 
$\eta_{\mathrm {indef}}^1 \co $det$(\ga^1 \x \ga^1) \to \RP^{\infty} \x \RP^{\infty}$ 
and the trivial line bundle $\eta_{\mathrm {def}}^1 \co \ep^1 \to \CP^{\infty}$, respectively.
Similarly, the target of the universal oriented simple (or framed) indefinite germ bundle is the trivial line bundle
over $\RP^{\infty}$.

The relation between the universal oriented indefinite and oriented simple (or framed) indefinite germ bundles  can be seen in the following bundle inclusion, which is induced by the inclusion of the above subgroup $\Z_2$ into the automorphism
group $\Z_2 \oplus \Z_2$, i.e., the diagonal map $\RP^{\infty} \to \RP^{\infty} \x \RP^{\infty}$.
\begin{equation}\label{bundlediag}
\begin{CD}
\ep_{\RP^{\infty}}^1 @>>> {\mathrm {det}}(\ga^1 \x \ga^1) \\
@VVV @VVV \\
\RP^{\infty} @>>> \RP^{\infty} \x \RP^{\infty}
\end{CD}
\end{equation}

%\subsection{Cobordism invariants of oriented simple fold maps}
%
%Let $f \co Q^{n+1} \to N^{n}$ be an oriented simple fold map.
%By the previous observation, we can assign to the map $f$
%the framed immersion of its indefinite singular set $S_1(f)$ into $N^n$
%and a line bundle $l^1$ over $S_1(f)$ which represents the 
%indefinite germ bundle with structure group $\Z_2$ (e.g. by taking the line $\{x=y\}$ in the source 
%of the indefinite germ
%in each fiber). This line bundle 
%gives a 1-codimensional embedding into $S_1(f)$, which represents the Poincar\'e dual to the first Stiefel-Whitney class $w_1(l^1)$, and hence
%a 2-codimensional immersion into $N^n$ with normal bundle induced from $\ep^1 \x \ga^1$.
%And these informations determine uniquely the cobordism class of the simple fold map $f$. 
%Hence, we have a semigroup homomorphism
%\[
%{\mathcal I_N} \co \CC ob_{s}(N^n) \to \imm^{\ep^1}_N(n-1,1) \oplus \imm^{\ep^1 \x \ga^1}_N(n-2,2)
%\]
%which is a group homomorphism in the case of $N^n = \R^n$.

\subsection{Cobordism invariants of oriented fold maps}

We define the homomorphisms
\[
{\mathcal I}_{\s}(N^n) \co \CC ob_{\s}(N^n) \to \imm^{\ep^1_{\RP^{\infty}}}_N(n-1,1)
\]
and
\[
{\mathcal D}_{\s}(N^n) \co \CC ob_{\s}(N^n) \to \imm^{\ep^1}_N(n-1,1)
\]
by
 mapping a cobordism class of an oriented simple fold map into the cobordism class of the immersion of its 
indefinite fold singular set 
with normal bundle induced from
the bundle $\ep^1 \to  \RP^{\infty}$,
and the cobordism class of the immersion of its 
definite fold singular set, respectively, and the homomorphisms
\[
{\mathcal I}_{\f}(N^n)  \co \CC ob_{\f}(N^n) \to \imm^{{\mathrm {det}}(\ga^1 \x \ga^1)}_N(n-1,1)
\]
and
\[
{\mathcal D}_{\f}(N^n) \co \CC ob_{\f}(N^n) \to \imm^{\ep^1_{\CP^{\infty}}}_N(n-1,1)
\]
by mapping a cobordism class of an oriented fold map into
 the cobordism class of the immersion of its 
indefinite fold singular set  with normal bundle induced from
the bundle $\eta_{\mathrm {indef}}^1 \co $det$(\ga^1 \x \ga^1) \to \RP^{\infty} \x \RP^{\infty}$,
and
 the cobordism class of the immersion of its 
definite fold singular set  with normal bundle induced from
 the bundle $\eta_{\mathrm {def}}^1 \co \ep^1 \to \CP^{\infty}$, respectively, 
 see the previous section.

Analogously, we define the homomorphisms
$
{\mathcal I}_{\f}^{\fr}(N^n)  \co \CC ob_{\f}^{\fr}(N^n) \to \imm^{\ep^1_{\RP^{\infty}}}_N(n-1,1)
$
and
$
{\mathcal D}_{\f}^{\fr}(N^n) \co \CC ob_{\f}^{\fr}(N^n) \to \imm^{\ep^1_{\CP^{\infty}}}_N(n-1,1)
$.

%(Therefore by \cite{We}, we have a homomorphism
%\[
%\csi_{{\mathrm {indef}}, n}^N \oplus \csi_{{\mathrm {def}}, n}^N
% \co \CC ob_{fold}(N^n) \to \{\dot N, T{\mathrm {det}}(\ga^1 \x \ga^1) \} \oplus \{ \dot N, T\ep^1_{\CP^{\infty}} \})
%\]
%since 
%the cobordism group of $k$-codimensional immersions into 
%$N^{n}$ with normal bundle induced from a vector bundle $\csi^k$ is 
%isomorphic to the group of stable homotopy classes $ \{ \dot N, T\csi^k \}$
%%of the Thom-space $T\csi^k$ 
%\cite{We}.
%We remark that the group $\{ \dot N, T\ep^1_{\CP^{\infty}} \}$
%%$\pi_n^s(T\ep^1_{\CP^{\infty}})$
%is canonically isomorphic to the group 
%$\{ \dot N, S^1 \vee S\CP^{\infty} \}$
%%$\pi_n^s(S^1 \vee S\CP^{\infty})$
%which is $\pi_{n-1}^s \oplus \pi_{n-1}^s(\CP^{\infty})$ in the case of $N^n=\R^n$.)

Note that the diagram~(\ref{bundlediag}) induces a 
homomorphism 
$$\imm^{\ep^1_{\RP^{\infty}}}_N(n-1,1) \to \imm^{{\mathrm {det}}(\ga^1 \x \ga^1)}_N(n-1,1)$$ and hence a
commutative diagram
\begin{equation}\label{invariinlc}
\begin{CD}
\CC ob_{\s}(N^n) @> {{\mathcal I}_{\s}(N^n)}  >>  \imm^{\ep^1_{\RP^{\infty}}}_N(n-1,1) \\
@VVV @VVV \\
\CC ob_{\f}(N^n)  @>{{\mathcal I}_{\f}(N^n)} >> \imm^{{\mathrm {det}}(\ga^1 \x \ga^1)}_N(n-1,1),
\end{CD}
\end{equation}
where the left vertical arrow is the natural homomorphism which maps a simple fold cobordism class to its fold cobordism class.

\section{Main results}\label{mainthms}

Now, we are ready to state our main theorems.

\begin{thm}\label{fotetel}
The semigroup homomorphism ${{\mathcal I}_{\s}(N^n)} $ is a semigroup isomorphism between the 
cobordism semigroup $\CC ob_{\s}(N^n)$ of oriented simple fold maps and the group 
$\imm^{\ep^1_{\RP^{\infty}}}_N(n-1,1)$. (The homomorphism ${{\mathcal I}_{\s}(\R^n)} $
is a group isomorphism.)
%The cobordism group $\CC ob_S(n+1,-1)$ of simple fold maps $(n \geq 1)$ is isomorphic to
%the group $\pi_{n-1}^s \oplus \pi_{n-1}^s(\RP^{\infty})$.
\end{thm}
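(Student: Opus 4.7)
The plan is to build an explicit inverse to $\mathcal I_{\s}(N^n)$ by a Pontryagin--Thom type construction, recovering a simple fold map from its indefinite singular set together with the $\RP^{\infty}$-classifying data of its normal bundle. Fix a class in $\imm^{\ep^1_{\RP^{\infty}}}_N(n-1,1)$, represented by a codimension-$1$ immersion $g \co V^{n-1} \looparrowright N^n$ with a classifying map $c \co V \to \RP^{\infty}$; equivalently, $c$ determines a double cover $\pi \co \tilde V \to V$. By the analysis of the universal oriented simple indefinite germ bundle in Section~\ref{foldgerms}, the structure group of such a bundle reduces to the $\Z_2$ generated by $((x,y) \mapsto (-x,-y), \mathrm{id}_{\R})$, so we can form the associated $\R^2$-bundle $E := \tilde V \times_{\Z_2} \R^2$, together with the trivial $\R$-bundle $V \x \R$, and the $\Z_2$-invariant map $(x,y) \mapsto x^2 - y^2$ descends to a fibrewise indefinite fold map $E \to V \x \R$. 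Identifying $V \x \R$ with a tubular neighbourhood of $g(V) \subset N^n$ (which is possible because $g$ has trivial normal bundle) produces a local simple fold model around $g(V)$.

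The next step is to extend this local model to a simple fold map $f \co Q^{n+1} \to N^n$ defined over all of $N^n$. Away from $g(V)$, the map must be a proper submersion whose fibres are closed $1$-manifolds; the boundary of the above neighbourhood of $E$ fibres over the complement of $g(V)$ as a disjoint collection of arcs, and one caps these arcs off at their free ends by introducing definite folds, equivalently by gluing in trivial $D^2$-fibre bundles along circles. The existence of such a capping relies on the triviality of the normal bundle of $g$ and the fact that the arc ends can always be paired up fibrewise to produce closed circles. This yields a simple fold map $f \co Q^{n+1} \to N^n$ satisfying $\mathcal I_{\s}([f]) = [(g,c)]$, which gives surjectivity.

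For injectivity of $\mathcal I_{\s}(N^n)$, the same argument applied relatively to a cobordism in $N^n \x [0,1]$ shows that two simple fold maps with cobordant indefinite singular sets are simple-cobordant: a cobordism of indefinite immersions (with compatible $\ep^1_{\RP^{\infty}}$-normal data) extends to a simple fold cobordism by the above construction. Well-definedness of the inverse requires the further check that two different cappings of the same indefinite data produce simple-cobordant fold maps, which is done by exhibiting an explicit model cobordism that births and then cancels a pair of definite folds in the regular-fibre region. The claim over $\R^n$ follows immediately because $\imm^{\ep^1_{\RP^{\infty}}}_{\R^n}(n-1,1)$ is always a group, and a semigroup isomorphism to a group forces the source to be a group as well.

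The principal obstacle will be controlling the capping-off procedure so that it is both globally consistent and natural enough to carry cobordisms to cobordisms. Concretely, one must verify that the local arc pairings near each figure-eight fibre can be consistently extended over $N \setminus g(V)$ (and relatively over $(N \x [0,1]) \setminus g(V \x [0,1])$), and that any two such extensions are related by a controlled birth-death of definite folds. The rigidity of the $\Z_2$-structure-group reduction from Section~\ref{foldgerms} is what makes this extension both possible and essentially unique up to simple fold cobordism.
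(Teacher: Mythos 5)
Your overall strategy is the right one — recover the simple fold map from the indefinite singular immersion plus its $\RP^{\infty}$-classifying data, using the $\Z_2$-structure-group reduction from Section~\ref{foldgerms} — but the execution diverges from the paper at the crucial step and leaves a genuine gap there. The paper's proof does not build the fold map from the local $\R^2$-model $\tilde V \times_{\Z_2} \R^2 \to V \times \R$ followed by a capping-off; it instead proves a normalization result (Proposition~\ref{puncrepr}) first: every simple fold cobordism class has a representative whose source is the \emph{trivial} $S^2$-bundle over the indefinite singular set $S_1$, with the map restricted to each $S^2$-fiber being a fixed Morse function $h$ with one indefinite and three definite critical points, composed with an immersion of $S_1 \times \R$ onto a tubular neighbourhood of $g(S_1)$. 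Because the fiber is the compact $S^2$ (not $\R^2$), there is nothing to cap off: the definite folds are already present as the three definite critical points of $h$, the source is already closed, and the image is automatically contained in the tubular neighbourhood. The map $\mu_n$ and its inverse are then immediate.

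The concrete gap in your argument is the capping-off procedure, which you correctly flag as the ``principal obstacle'' but do not actually carry out. Starting from the non-compact local model $E = \tilde V \times_{\Z_2} \R^2 \to V \times \R$, you need to compactify the fibers so that the map becomes a proper map from a closed $(n+1)$-manifold, with the added pieces realized as definite-fold germ bundles matching up with the indefinite part and with each other. This is not simply a matter of pairing arc-ends over $N \setminus g(V)$: the preimage must lie entirely over a tubular neighbourhood of $g(V)$ (the source is compact and the complement of the tube contains no image), so the whole compactification has to happen fiberwise within the tube, and you must check both that a consistent choice exists and that different choices give simple-cobordant maps. These are precisely the assertions that Proposition~\ref{puncrepr} (and its relative version used for injectivity) packages into a single normalization statement, using the structure-group-reduction result (Theorem~\ref{strcsopveges} in the Appendix) to identify the compactified fiber as $S^2$ with the specific Morse function. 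Without something playing the role of Proposition~\ref{puncrepr}, both your surjectivity and your injectivity arguments reduce to unsupported claims about the existence and essential uniqueness of the capping; the relative/injectivity case inherits the same gap.

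If you want to salvage your route, the missing lemma is exactly: \emph{the local indefinite $\Z_2$-model over a tubular neighbourhood of $g(V)$ extends, uniquely up to simple fold cobordism relative to the indefinite data, to a closed simple fold map with image in the tube}, and the cleanest way to prove it is to observe (as the paper does via Proposition~\ref{puncrepr}) that the compactified fiber is forced to be the Morse function $h \co S^2 \to \R$ with $1+3$ critical points and that the resulting $S^2$-bundle is trivial because the $\Z_2$-action is by the $180$-degree rotation. At that point your argument and the paper's essentially coincide, and the identification of $\mathcal I_{\s}(N^n)$ with the resulting isomorphism $\mu_n$ finishes the proof.
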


\begin{cor}
The semigroup $\CC ob_{\s}(N^n)$ is a group.
\end{cor}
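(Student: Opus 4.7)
The plan is to deduce this immediately from Theorem~\ref{fotetel} by transporting the group structure through the isomorphism. First I would recall that the immersion cobordism set $\imm^{\ep^1_{\RP^{\infty}}}_N(n-1,1)$ is already known to be a genuine abelian group, not just a semigroup: as indicated in the Notations subsection, it is identified via a Pontryagin--Thom construction with the stable homotopy set $\{\dot N, T\ep^1_{\RP^{\infty}}\}$, which inherits its additive inverses from the suspension coordinate in the usual way. In particular, every element admits an additive inverse.

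Next I would observe that the disjoint union operation on $\CC ob_{\s}(N^n)$ always provides a two-sided identity, namely the cobordism class of the empty simple fold map over $N^n$; so $\CC ob_{\s}(N^n)$ is at least a commutative monoid, and ${\mathcal I}_{\s}(N^n)$ sends this identity to the identity of the immersion group (since the empty simple fold map has empty indefinite singular set). Given any class $[f] \in \CC ob_{\s}(N^n)$, I take the inverse of ${\mathcal I}_{\s}(N^n)([f])$ in $\imm^{\ep^1_{\RP^{\infty}}}_N(n-1,1)$ and pull it back along the bijection provided by Theorem~\ref{fotetel} to produce a candidate inverse $[g] \in \CC ob_{\s}(N^n)$. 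Applying the semigroup homomorphism ${\mathcal I}_{\s}(N^n)$ to $[f] + [g]$ yields the zero class on the immersion side, and injectivity of ${\mathcal I}_{\s}(N^n)$ then forces $[f] + [g]$ to equal the identity class.

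There is essentially no obstacle here, since all of the genuine work is already buried inside Theorem~\ref{fotetel}; the corollary is a purely formal consequence of the elementary fact that any commutative monoid which is semigroup-isomorphic to a group is itself a group. The only point worth mentioning explicitly is that the construction of ${\mathcal I}_{\s}(N^n)$ manifestly respects the monoid identity, so no separate verification of unit preservation is required.
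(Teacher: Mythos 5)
Your proposal is correct and is precisely the intended argument: Theorem~\ref{fotetel} provides a semigroup isomorphism onto the group $\imm^{\ep^1_{\RP^{\infty}}}_N(n-1,1)$, and any semigroup isomorphic to a group is itself a group. The paper leaves the corollary without explicit proof because it is this immediate formal consequence, and your added details about transporting the identity and inverses are a harmless elaboration of the same reasoning.
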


\begin{cor}\label{stablekov}
Let $n \geq 1$ and let $p$ be a prime number. Then,
\begin{enumerate}
\item
the cobordism group $\CC ob_{\s}(N^n)$ of oriented simple fold maps is isomorphic to
the group $\{ \dot N, S^1\} \oplus  \{ \dot N, S\RP^{\infty} \}$,
%$\pi_{n-1}^s \oplus \pi_{n-1}^s(\RP^{\infty})$,
\item
the cobordism group $\CC ob_{\s}(\R^n)$ has no $p$-torsion if $p > {(n+2)}/{2}$,
and its $p$-torsion is $\Z_p$ if $p$ is odd and $n = 2p -2$ and
\item
the order of a simple fold map is always finite in the fold cobordism group
  $\CC ob_{\f}(\R^n)$ $(n>1)$.
\end{enumerate}
\end{cor}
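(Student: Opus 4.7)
The plan is to deduce all three parts from Theorem~\ref{fotetel} together with the Pontryagin--Thom identification $\imm^{\ep^1_{\RP^{\infty}}}_{N}(n-1,1)\cong\{\dot N,T\ep^1_{\RP^{\infty}}\}$ quoted in the Notations, plus a few standard stable-homotopy computations.

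For part (1), I would first observe that the Thom space of the trivial line bundle over a (pointed) space $X$ is the reduced suspension of $X_{+}$, and that for $X=\RP^{\infty}$ this has the well-known splitting $\Sigma(\RP^{\infty}_{+})\simeq S^{1}\vee S\RP^{\infty}$. Combined with Theorem~\ref{fotetel}, this gives $\CC ob_{\s}(N^{n})\cong\{\dot N,\,S^{1}\vee S\RP^{\infty}\}$, and since $\{\dot N,-\}$ is a cohomology theory it is additive on wedges, which produces the claimed direct-sum decomposition.

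For part (2), I specialize to $N=\R^{n}$, so $\dot N=S^{n}$ and (1) yields $\CC ob_{\s}(\R^{n})\cong\pi_{n-1}^{s}\oplus\pi_{n-1}^{s}(\RP^{\infty})$. For the first summand I invoke Serre's theorem that, for odd primes $p$, the first $p$-torsion in the stable stems $\pi_{k}^{s}$ occurs at $k=2p-3$ and equals $\Z_{p}$. For the second summand I use that $\widetilde H^{*}(\RP^{\infty};\Z/p)=0$ for every odd prime $p$, which via the Atiyah--Hirzebruch spectral sequence forces $\pi_{*}^{s}(\RP^{\infty})$ to be $2$-primary. The inequality $p>(n+2)/2$ is equivalent to $n-1<2p-3$, under which both summands are $p$-torsion free; while for odd $p$ with $n=2p-2$, i.e.\ $n-1=2p-3$, only the first summand contributes at the prime $p$, and it contributes exactly $\Z_{p}$.

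For part (3), if $n>1$ then $n-1\geq 1$, and both summands in (2) are finite abelian groups, by Serre's finiteness theorem for $\pi_{n-1}^{s}$ and by the fact that $\pi_{*}^{s}(\RP^{\infty})$ is finite in positive degrees (the Atiyah--Hirzebruch $E_{2}$-page is finite in each positive total degree because $\RP^{\infty}$ has vanishing rational reduced homology). Thus $\CC ob_{\s}(\R^{n})$ is finite; every element has finite order, and the natural homomorphism $\CC ob_{\s}(\R^{n})\to\CC ob_{\f}(\R^{n})$ (the left vertical arrow in (\ref{invariinlc})) preserves this, giving (3). The only genuinely new step is the Thom-space identification $T\ep^{1}_{\RP^{\infty}}\simeq S^{1}\vee S\RP^{\infty}$; once this wedge splitting is in place, the rest is routine stable homotopy theory.
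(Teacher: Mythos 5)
Your proof is correct and follows essentially the same route as the paper: Theorem~\ref{fotetel} plus the Pontryagin--Thom identification $\imm^{\ep^1_{\RP^{\infty}}}_N(n-1,1)\cong\{\dot N,T\ep^1_{\RP^{\infty}}\}$, the wedge splitting $T\ep^1_{\RP^{\infty}}\simeq S^1\vee S\RP^{\infty}$, and then Serre's theorems (finiteness of positive stable stems, first $p$-torsion at $2p-3$) together with the $2$-primarity of $\widetilde\pi_*^{s}(\RP^{\infty})$ for parts (2) and (3). The paper's proof is terser (``follows from well-known theorems about the prime-torsions of $\pi_{n-1}^{s}$ and $\pi_{n-1}^{s}(\RP^{\infty})$''), and your write-up simply fills in those quoted facts and the Atiyah--Hirzebruch argument that backs them.
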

\begin{proof}
(1) follows from the equalities 
$\CC ob_{\s}(N^n) = \imm^{\ep^1_{\RP^{\infty}}}_N(n-1,1)
= \{ \dot N, T\ep^1_{\RP^{\infty}} \} = \{ \dot N, S^1 \vee S\RP^{\infty}\} = 
\{ \dot N, S^1\} \oplus  \{ \dot N, S\RP^{\infty} \}$. (2) follows from well-known theorems about the prime-torsions of the groups 
 $\pi_{n-1}^s$ and  $\pi_{n-1}^s(\RP^{\infty})$. (3) follows from the fact that the groups 
 $\pi_{n-1}^s$ and  $\pi_{n-1}^s(\RP^{\infty})$ are finite for $n>1$.
\end{proof}

Let \[\phi_n^N \co \CC ob_{\s}(N^n) \to \CC ob_{\f}(N^n)\] 
($\phi_n$ in the case of $N^n = \R^n$)
denote the natural homomorphism
which maps a simple fold cobordism class into its fold cobordism class.

%We prove the following theorem in Section~\ref{invari}.

\begin{thm}\label{injdontes}
\noindent
\begin{enumerate}
%\noindent
%\item
%There exists a homomorphism $\al$ such that  the diagram
%\[
%\begin{equation}%\label{invaridiag}
%\begin{CD}
%\CC ob_{s}(N^n) @> \pi_{n,2}^N>>  \imm^{\ep^1 \x \ga^1}_N(n-2,2) \\
%@VV \phi_n^N V @V \ga_{n,2}^N VV \\
%\CC ob_{\f}(N^n) @> \al >> \imm^{\ga^1 \x \ga^1}_N(n-2,2)
%\end{CD}
%\]
%\end{equation}
%commutes.
\item
The simple fold cobordism 
group $\CC ob_{\s}(N^n)$ is a direct summand of the framed fold cobordism semigroup $\CC ob_{\f}^{\fr}(N^n)$.
\item
The direct summand $\{ \dot N, S^1\}$ of $\CC ob_{\s}(N^n)$
is mapped by $\phi_n^N$ isomorphically onto a direct summand of 
the fold cobordism semigroup $\CC ob_{\f}(N^n)$.
%The homomorphism $\pi_{n,2}^N \mid_{\mathrm{Ker} \phi_n^N}$ is injective.
\item
If %$\ga_{n,2}^N$
the natural forgetting homomorphism\footnote{Induced by the inclusion 
$\RP^{\infty} \hookrightarrow \RP^{\infty} \x \RP^{\infty}$,
$x \mapsto (*,x)$.%of the second term $\RP^{\infty}$.
}
$$\imm^{\ep^1 \x \ga^1}_N(n-2,2) \to \imm^{\ga^1 \x \ga^1}_N(n-2,2)$$
 is injective, then so is $\phi_n^N$.
\item
If there exists a fold map from a not null-cobordant 
$(n+1)$-dimensional manifold into $N^n$, then
$\phi_n^N$ is not surjective.
\end{enumerate}
\end{thm}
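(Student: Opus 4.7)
I would treat the four claims separately, each resting on the main Theorem~\ref{fotetel} together with the cobordism invariants ${\mathcal I}_{\s}$, ${\mathcal I}_{\f}$ and ${\mathcal I}_{\f}^{\fr}$ constructed in Section~\ref{foldgerms}.

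For part (1), the natural inclusion $j \co \CC ob_{\s}(N^n) \to \CC ob_{\f}^{\fr}(N^n)$ coming from the canonical framing of the indefinite singular set of a simple fold map satisfies ${\mathcal I}_{\f}^{\fr} \circ j = {\mathcal I}_{\s}$ by construction, since both sides record the cobordism class of the indefinite singular immersion together with its $\ep^1_{\RP^{\infty}}$-structure. The isomorphism ${\mathcal I}_{\s}$ of Theorem~\ref{fotetel} then furnishes $({\mathcal I}_{\s})^{-1} \circ {\mathcal I}_{\f}^{\fr}$ as a left inverse to $j$. For part (2), the summand $\{\dot N, S^1\}$ of $\CC ob_{\s}(N^n) = \{\dot N, S^1 \vee S\RP^{\infty}\}$ consists of cobordism classes whose indefinite singular-set immersion has null-homotopic $\RP^{\infty}$ classifying map, i.e.\ lies in the subgroup $\imm^{\ep^1}_N(n-1,1)$. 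The commutative diagram (\ref{invariinlc}) shows that ${\mathcal I}_{\f}\circ \phi_n^N$ restricted to this summand factors through the bundle inclusion (\ref{bundlediag}) pulled back along the basepoint inclusion $\{*\}\hookrightarrow \RP^{\infty}\x\RP^{\infty}$. Since $\mathrm{det}(\ga^1\x\ga^1)$ restricts trivially to this basepoint and the suspension spectrum of $(\RP^{\infty}\x\RP^{\infty})_+$ splits off its $S^0$ summand, I expect to produce a stable retraction $T\mathrm{det}(\ga^1\x\ga^1) \to S^1$ whose composition with ${\mathcal I}_{\f}$ supplies the required retraction of $\CC ob_{\f}(N^n)$ onto $\{\dot N, S^1\}$.

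For part (3), the plan is to show that the kernel of $\phi_n^N$ is detected by the double points of the indefinite singular set of a general fold cobordism. Given simple fold maps $f_0,f_1$ with a fold cobordism $F$ between them, the indefinite singular immersion of $F$ carries a codimension-two self-intersection locus in $N\x[0,1]$ whose normal bundle is naturally a product of two line bundles, classified by $\ga^1\x\ga^1$. Because $f_0$ and $f_1$ are simple, one of the two intersecting sheets inherits an $\ep^1$-framing along a collar of the boundary, producing an $\ep^1\x\ga^1$ lift of the classifying data. A transversality and surgery argument on $F$ should reduce the removal of the non-simple double points to showing that this class bounds in $\imm^{\ep^1\x\ga^1}_N(n-2,2)$; the injectivity of the forgetting map then supplies the needed bounding from triviality in $\imm^{\ga^1\x\ga^1}_N(n-2,2)$, whence $[f_0]=[f_1]$ in $\CC ob_{\s}(N^n)$.

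For part (4), I would argue that every oriented simple fold map into $N^n$ has null-cobordant source. Using Theorem~\ref{fotetel} to replace an arbitrary simple fold map by a Pontryagin--Thom representative, the source becomes an $S^1$-bundle over $N$ modified along a framed codimension-one immersion $\Sigma\hookrightarrow N$ by the introduction of figure-eight singular fibers, and such a source bounds an explicit $(n+2)$-manifold obtained from the corresponding disk-bundle by the analogous modification. Since oriented fold cobordism preserves the source cobordism class, the source homomorphism $\CC ob_{\f}(N^n)\to\Omega_{n+1}$ vanishes on the image of $\phi_n^N$, so the hypothesized fold map from a not null-cobordant $(n+1)$-manifold cannot lie in this image. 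The main obstacle I foresee lies in part (3), where pinning down the normal-bundle structure of the double-point locus of a non-simple fold cobordism and checking that removing these double points corresponds precisely to the injectivity of the forgetting map will require a careful singularity-theoretic analysis, presumably along the lines of the figure-eight arguments carried out in the Appendix.
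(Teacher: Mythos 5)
Your part (1) is essentially the paper's argument (a section of ${\mathcal I}_{\f}^{\fr}$ built from the inverse of ${\mathcal I}_{\s}$), and your part (4) follows the paper's logic (null-cobordance of sources of simple fold maps plus the fact that cobordant fold maps have cobordant sources), though your description of the Pontryagin--Thom representative conflates the $S^2$-bundle over the indefinite singular set of Proposition~\ref{puncrepr} with an $S^1$-bundle over $N$.

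Part (2) has a genuine gap. You try to split off $\{\dot N, S^1\}$ from $\CC ob_{\f}(N^n)$ by composing ${\mathcal I}_{\f}$ with a putative stable retraction $T\,{\mathrm {det}}(\ga^1\x\ga^1)\to S^1$. But no such retraction exists, even stably: the restriction of ${\mathrm {det}}(\ga^1\x\ga^1)$ to a slice $\RP^1\x\{*\}$ is the M\"obius bundle, whose Thom space is $\RP^2$, so the $2$-cell of $T\,{\mathrm {det}}(\ga^1\x\ga^1)$ is attached to the bottom $S^1$ by a degree-$2$ map; since degree $2$ is non-zero in $\pi_1^s(S^1)\cong\Z$, the bottom cell is not a (stable) retract. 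Splitting off $S^0$ from $(\RP^{\infty}\x\RP^{\infty})_+$ does not help because ${\mathrm {det}}(\ga^1\x\ga^1)$ is not trivial, so the Thom space is not a smash with $S^1$. The paper avoids this by using the \emph{definite} singular-set invariant ${\mathcal D}_{\f}(N^n)\co\CC ob_{\f}(N^n)\to\imm^{\ep^1_{\CP^{\infty}}}_N(n-1,1)$, where the universal bundle is genuinely trivial and $\{\dot N,S^1\}$ is an honest summand of the target; the retraction then comes from the geometric observation that on the representative $g$ of Proposition~\ref{puncrepr} with trivial structure group, the three definite-set immersions are parallel to $g|_{S_1(g)}$ and sum to its inverse.

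For part (3) you propose an independent surgery/double-point argument and acknowledge it is the weak point; the paper instead deduces (3) by a short diagram chase in (\ref{invaridiag}), using that ${\mathcal I}_{\s}(N^n)$ and $\theta_{\s}^N$ are isomorphisms and invoking part (2) to control the $\imm^{\ep^1}_N(n-1,1)$ factor of $\ga_n^N$, so that injectivity of $\imm^{\ep^1\x\ga^1}_N(n-2,2)\to\imm^{\ga^1\x\ga^1}_N(n-2,2)$ yields injectivity of $\phi_n^N$. Your geometric picture of the double-point locus is a reasonable heuristic for why $\ga_n^N$ is the right map, but as written it is far from a proof; it would need the precise correspondence between non-simple singular fibers of a fold cobordism and classes in $\imm^{\ga^1\x\ga^1}_N(n-2,2)$, which the paper obtains instead by building $\theta_{\f}^N$ and $\theta_{\s}^N$ from Poincar\'e duals of $w_1$.
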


We prove these in Section~\ref{geomkod}.

By Theorem~\ref{injdontes}, we obtain in Section~\ref{specesetek} the following.

\begin{prop}
%Let the manifold $N^n$ be the Euclidean space $\R^n$.
For $n = 1, 2$ the homomorphism $\phi_n$ is an isomorphism,
$\phi_3$ is injective but not surjective, and 
for $n = 5, 6$ the homomorphism $\phi_n$ is injective.
\end{prop}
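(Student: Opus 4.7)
\emph{Proof plan.} The plan is to apply Theorem~\ref{injdontes} case by case and supplement it with direct low-dimensional computations. For the injectivity claims (cases $n=1,2,3,5,6$) I would verify the hypothesis of part~(3) of Theorem~\ref{injdontes}. By the Pontryagin--Thom identification from Section~\ref{s:jelol},
\[
\imm^{\ep^1 \x \ga^1}(n-2,2) \cong \pi_n^s(\Sigma\RP^{\infty}) \cong \pi_{n-1}^s(\RP^{\infty})
\]
and
\[
\imm^{\ga^1 \x \ga^1}(n-2,2) \cong \pi_n^s(\RP^{\infty} \wedge \RP^{\infty}),
\]
and the forgetting homomorphism corresponds to the stable map $j_n \co \pi_{n-1}^s(\RP^{\infty}) \to \pi_n^s(\RP^{\infty} \wedge \RP^{\infty})$ induced by smashing the bottom-cell inclusion $\RP^1 = S^1 \hookrightarrow \RP^{\infty}$ with the identity on $\RP^{\infty}$. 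For $n=1$ the source is $\pi_0^s(\RP^{\infty})=0$, so $j_1$ is trivially injective. For $n=2,3,5,6$ both groups are finite and accessible via the Atiyah--Hirzebruch spectral sequence combined with the known stable stems through degree $6$; I would confirm injectivity by detecting generators of $\pi_{n-1}^s(\RP^{\infty})$ on appropriate cells of $\RP^{\infty} \wedge \RP^{\infty}$ using primary cohomology operations.

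For the non-surjectivity of $\phi_3$ I would apply part~(4) of Theorem~\ref{injdontes}: since $\Omega_4 \cong \Z$ is generated by $\CP^2$ and the latter admits a fold map into $\R^3$ (a standard construction in the fold-map literature), the hypothesis is satisfied. For the surjectivity of $\phi_1$ and $\phi_2$ part~(4) is vacuous because $\Omega_{n+1}=0$; instead I would argue geometrically that every oriented fold map $Q^{n+1} \to \R^n$ is fold-cobordant to a simple one. For $n=1$ this reduces to separating the coinciding indefinite critical values of a Morse function on a closed oriented surface by a small perturbation, realized inside a cylindrical cobordism whose critical-value curves in $\R \x [0,1]$ are distinct, so that the cobordism is automatically a simple fold map. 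For $n=2$ an analogous local modification applies to the finitely many non-simple indefinite singular fibers of a fold map $Q^3 \to \R^2$, using the classification of singular fibers in \cite{Sa}.

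The main obstacle is controlling $j_n$ in dimensions $n=3,5,6$: although $\pi_{n-1}^s(\RP^{\infty})$ is small in this range, the target $\pi_n^s(\RP^{\infty} \wedge \RP^{\infty})$ carries many $2$-torsion classes coming from the cross cells of $H_*(\RP^{\infty}) \otimes H_*(\RP^{\infty})$, so one must rule out that generators of the source hit classes killed by Atiyah--Hirzebruch differentials or absorbed in extensions. Detection via Steenrod operations should suffice, since the relevant generators of $\pi_{n-1}^s(\RP^{\infty})$ are detected by primary operations whose images in $H^*(\RP^{\infty} \wedge \RP^{\infty})$ can be verified to survive.
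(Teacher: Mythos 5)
The overall framework is the same as the paper's: apply Theorem~\ref{injdontes}, handle non-surjectivity of $\phi_3$ via part~(4) with a fold map from a non-null-cobordant $4$-manifold, and handle surjectivity for $n=1,2$ by a direct geometric argument. Where your route diverges is in verifying the hypothesis of part~(3). The paper does not compute $\pi_n^s(\RP^\infty\wedge\RP^\infty)$ at all: for $n=3$ it builds a geometric detector $r\co\imm^{\ga^1\times\ga^1}(1,2)\to\imm(2,1)$ (putting a figure-eight curve in each normal fiber) and checks $r\circ\ga_{3,2}\neq 0$; for $n=5$ it uses the known isomorphism $\imm^{\ep^1\times\ga^1}(3,2)\cong\imm^{\ga^1}(3,1)\cong\Z_2$ and injectivity of $\imm^{\ep^1\times\ga^1}(3,2)\to\imm(3,2)$ from \cite{Hu}, so injectivity of $\ga_{5,2}$ is forced since it is the first map in a composition whose total is injective. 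Your AHSS-plus-Steenrod-operations plan would also work (e.g., for $n=3$ one can check that $d_2\co H_4(\RP^\infty\wedge\RP^\infty;\Z)\to H_2(\RP^\infty\wedge\RP^\infty;\Z_2)$ vanishes on the image of integral homology, so the bottom-cell $\eta$ survives), but it is heavier, and you have only sketched it (``should suffice'' is not a verification). The trade-off: the paper's detectors are lightweight and concrete but ad hoc; your approach is more uniform but requires carrying out a nontrivial spectral-sequence computation in each of $n=3,5$.

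Two concrete issues with your plan. First, for $n=6$ you propose to analyze $j_6$, but you are overlooking that the entire source group $\CC ob_{\s}(\R^6)\cong\pi_5^s\oplus\pi_5^s(\RP^\infty)=0$ vanishes, which makes $\phi_6$ injective for free; checking part~(3) is unnecessary. Second, for $n=2$ your phrase ``an analogous local modification'' is misleading: unlike $n=1$, where a generic perturbation (a homotopy, hence a cobordism) already separates coinciding critical values, for $n=2$ a non-simple singular fiber such as $\ii^2$ or $\ii^3$ cannot be removed by a perturbation of the map — it is a stable phenomenon. One genuinely needs to pass through a cobordism that crosses codimension-one singular fibers; the paper constructs this explicitly using the fibers $\iii^4$ and $\iii^6$ from \cite{Sa}. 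As written, your argument for surjectivity of $\phi_2$ has a gap, since ``local modification'' reads as a perturbation, which does not suffice.
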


The following proposition will be useful in proving Theorem~\ref{fotetel}.

\begin{prop}\label{puncrepr}
Every element
of the simple fold cobordism group $\CC ob_{\s}(N^n)$ has a
representative $g \co Q^{n+1} \to N^n$  such that the source manifold 
$Q^{n+1}$ is the total space $S^2 \x S_{1}$ of a trivial two-sphere bundle  over $S_1$, where $S_{1}$ denotes 
the indefinite fold singular set of the fold map $g$, and 
the map $g$ restricted to any fiber $S^2$ is a composition $u \circ v$ of 
an embedding 
$u \co \R \to N^n$ and 
a Morse function $v \co S^2 \to \R$ with one indefinite and three definite critical points. 
\end{prop}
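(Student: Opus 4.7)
The plan is to fix a universal Morse model on $S^2$, build the representative $g$ from the framed immersion of the indefinite singular set of $f$, and then construct a simple fold cobordism from $f$ to $g$ whose non-trivial part is concentrated away from the indefinite singular set. First, choose once and for all a Morse function $v \co S^2 \to \R$ with exactly one indefinite critical point (the saddle $p_0$) and three definite ones, with all critical values lying in a small interval $(-\ep, \ep)$; such a $v$ exists because $3-1 = 2 = \chi(S^2)$ (e.g.\ the height function of an embedded sphere in $\R^3$ with two maxima, one saddle and one minimum, rescaled), and its unique indefinite singular fibre is a figure-eight. Given $f \co Q^{n+1} \to N^n$, let $S_{1} := S_{1}(f)$ and let $\iota := f|_{S_{1}} \co S_{1} \looparrowright N^n$ be the immersion, framed by the canonical trivialization of its normal line bundle coming from the oriented simple structure (Section~\ref{foldgerms}). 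Extend $\iota$ along this framing to an immersion $\tilde\iota \co (-\ep,\ep) \x S_{1} \looparrowright N^n$ such that each $\tilde\iota(\cdot, x)$ is an embedding, and define
\[
g \co S^2 \x S_{1} \to N^n, \qquad g(s, x) := \tilde\iota(v(s), x).
\]
Then $g$ is an oriented simple fold map whose indefinite singular set is $\{p_0\} \x S_{1}$, whose framed immersion and indefinite germ bundle agree with those of $f$ by construction, and whose restriction to any fibre $S^2 \x \{x\}$ equals $u_x \circ v$ for the embedding $u_x := \tilde\iota(\cdot, x) \co (-\ep,\ep) \hookrightarrow N^n$.

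It remains to produce an oriented simple fold cobordism from $f$ to $g$. Let $W \subset N^n$ be a small open tubular neighbourhood of $\iota(S_{1})$ chosen to avoid the images of the definite fold singular sets of both $f$ and $g$, and set $U := f^{-1}(W)$ and $V := g^{-1}(W)$. By the local normal form of oriented simple fold maps near a figure-eight singular fibre (this is precisely the content of the $\Z_2$-germ bundle classification in Section~\ref{foldgerms}), the map $f|_U \co U \to W$ is fibre-preservingly diffeomorphic to $g|_V \co V \to W$, so on this ``indefinite'' part the desired cobordism can be taken to be the collar product. The remaining task is to produce a fold cobordism with only definite fold singularities between the complements $Q \setminus U$ and $(S^2 \x S_{1}) \setminus V$ that agrees along the identified boundary $\partial U \cong \partial V$. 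Observe that $(S^2 \x S_{1}) \setminus V$ is diffeomorphic to three copies of the trivial $D^2$-bundle over $S_{1}$, each mapped into $N^n$ by the standard definite ``cap'' around an extremum of $v$.

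The main obstacle is this complementary cobordism. My plan is to use the local normal form $(x_n, x_{n+1}) \mapsto x_n^2 + x_{n+1}^2$ of a definite fold, which says that near each component of $S_0(f)$ every regular fibre component is a small circle collapsing onto a point of $S_0(f)$. Consequently $Q \setminus U$ is fibred by $D^2$'s over $S_0(f)$, joined to $\partial U$ along circle bundles. The analogous decomposition of $(S^2 \x S_{1}) \setminus V$ presents it as three copies of $D^2 \x S_{1}$, so the problem reduces to producing a cobordism in $N^n$ between the base $(n-1)$-dimensional submanifolds $S_0(f)$ and $S_{1} \amalg S_{1} \amalg S_{1}$ which supports a fibred $D^2$-bundle extending the given boundary data; this can be carried out using the orientability of the source and the fact that every closed oriented $1$-manifold bounds, which lets us freely modify the definite $1$-strata by simple-fold cobordism and cap off any superfluous circles by disks. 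The careful bookkeeping of this filling-in procedure---so that the resulting map remains an oriented simple fold map throughout and the source terminates as precisely three copies of $D^2 \x S_{1}$---is the technical core of the argument; once it is done, gluing the complementary cobordism to the collar product cobordism on $U$ yields the desired simple fold cobordism $F$.
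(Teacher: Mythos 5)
Your construction of $g$ is the product formula $g(s,x)=\tilde\iota(v(s),x)$ on $S^2\times S_1$ with the \emph{same} Morse function $v$ on every fibre. This forces the indefinite germ bundle of $g$ to be the trivial one: near $\{p_0\}\times S_1$ the map is $(y,z,x)\mapsto \tilde\iota(\,y^2-z^2,\,x)$ in coordinates that are globally consistent over $S_1$, so the structure $\Z_2$-bundle of Section~\ref{foldgerms} is classified by a null-homotopic map $S_1\to\RP^\infty$. For the original $f$, however, this $\Z_2$-bundle (the monodromy of the ``figure eight'' of the singular fibre around loops in $S_1(f)$) need not be trivial, and it is a cobordism invariant --- it is exactly the $\RP^\infty$-part of the data recorded by ${\mathcal I}_{\s}$. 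Consequently your $g$ is in general \emph{not} cobordant to $f$, and the sentence ``whose framed immersion and indefinite germ bundle agree with those of $f$ by construction'' is false: the framed immersion agrees, but the germ bundle has been forcibly trivialized. Concretely, for $N=\R^2$ with $S_1(f)\cong S^1$ there are oriented simple fold maps whose $\Z_2$-bundle over $S^1$ is the nontrivial double cover (this is the nontrivial element of $\pi_1^s(\RP^\infty)\cong\Z_2$), and your $g$ represents a different class, so the cobordism you set out to build cannot exist.

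The paper's argument avoids this: it first replaces $f$ by a cobordant simple fold map (via \cite[Proposition~3.4]{Kal2}) whose source is an $S^2$-bundle over $S_1$ carrying a \emph{twisted} family of the Morse function $v$, with structure group $\Z_2$ by Theorem~\ref{strcsopveges}; only then is the underlying $S^2$-bundle trivialized, using that the $180$-degree rotation is isotopic to the identity in $\DIFF(S^2)$. The point is that one obtains a diffeomorphism $Q^{n+1}\cong S^2\times S_1$ while $g$ remains a twisted Morse family --- precisely the twisting your product formula discards. To salvage your approach you would have to build $g$ on $E\Z_2\times_{\Z_2}S^2$ using the $\Z_2$-bundle coming from $f$'s germ bundle, not on the bare product, and then argue separately that this total space is diffeomorphic to $S^2\times S_1$.

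A secondary but also real problem is the final paragraph: the ``filling-in'' cobordism on the definite part is asserted rather than constructed. The definite fold singular set $S_0(f)$ is an $(n-1)$-manifold, not a $1$-manifold, so ``every closed oriented $1$-manifold bounds'' is not the statement you need, and the claim that the bookkeeping terminates with exactly three copies of $D^2\times S_1$ while staying within oriented simple fold maps is precisely the nontrivial content that the paper outsources to \cite[Proposition~3.4]{Kal2}.
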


An easy corollary of this proposition is

\begin{cor}\label{nullkob}
If there exists an oriented simple fold map of an $(n+1)$-dimensional manifold 
$Q^{n+1}$ into an (oriented)  $n$-dimensional manifold, then 
the manifold $Q^{n+1}$ is (oriented) null-cobordant.
\end{cor}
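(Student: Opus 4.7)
The plan is to derive this corollary directly from Proposition~\ref{puncrepr} together with the elementary observation that $S^2$ bounds a disk. First I would fix a simple fold map $f \co Q^{n+1} \to N^n$ and regard it as a representative of its class in $\CC ob_{\s}(N^n)$. By Proposition~\ref{puncrepr}, this class is also represented by some $g \co S^2 \x S_1 \to N^n$, where $S_1$ is the indefinite singular set of $g$. In particular, $f$ and $g$ are simple fold cobordant; let $F \co X^{n+2} \to N^n \x [0,1]$ be a cobordism between them in the sense of Definition~\ref{cobdef}.

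The second step is to extract manifold-level information from $F$. Forgetting the map $F$ and remembering only its source, the compact manifold $X^{n+2}$ provides a cobordism between the closed manifolds $Q^{n+1}$ and $S^2 \x S_1$: indeed $\del X^{n+2} = Q^{n+1} \amalg (S^2 \x S_1)$, and in the oriented setting condition (4) of Definition~\ref{cobdef} upgrades this to $\del X^{n+2} = Q^{n+1} \amalg (-(S^2 \x S_1))$, exactly the identity needed for oriented cobordism.

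The third step is to observe that $S^2 \x S_1$ is null-cobordant. Since $S^2 = \del D^3$ (with its standard orientation), one has
\[
S^2 \x S_1 = \del\bigl(D^3 \x S_1\bigr),
\]
and $D^3 \x S_1$ is a compact oriented manifold of dimension $n+2$. Hence $[S^2 \x S_1]=0$ in $\Om_{n+1}$ (and a fortiori in the unoriented bordism group). Concatenating with the cobordism $X^{n+2}$ from the previous step yields a null-cobordism of $Q^{n+1}$, which is the desired conclusion.

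I do not expect any real obstacle here; the content is entirely packaged into Proposition~\ref{puncrepr}. The only point requiring a brief verification is the orientation-compatibility in step two, i.e.\ that the orientation of $X^{n+2}$ produced by the simple fold cobordism $F$ restricts to the chosen orientations of $Q^{n+1}$ and $S^2 \x S_1$ with the correct signs; this is exactly what clause (4) of Definition~\ref{cobdef} is designed to guarantee in the case of an oriented target, and in the general case the unoriented conclusion suffices.
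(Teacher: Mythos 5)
Your proof is correct and is exactly the derivation the paper intends: the paper records no argument beyond calling this ``an easy corollary'' of Proposition~\ref{puncrepr}, and your three steps (pass to the $S^2\times S_1$ representative, read off a manifold cobordism $X^{n+2}$ from the simple fold cobordism, observe $S^2\times S_1=\del(D^3\times S_1)$) are that easy argument, with the orientation bookkeeping handled just as Definition~\ref{cobdef} prescribes.
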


Compare Corollary~\ref{nullkob} with \cite[Proposition~3.12]{Sa1}.
%We remark that Corollary~\ref{nullkob} in the case of oriented source manifold $Q^{n+1}$ was proven by from a completely different approach.

\begin{rem}\label{ori3mfdnullcob}
As an application of the surjectivity of 
the homomorphism $\phi_2 \co \CC ob_{\s}(\R^2) \to \CC ob_{\f}(\R^2)$,
we have a nice argument to show that every orientable
3-manifold is oriented null-cobordant.
Namely let $M^3$ be an orientable 3-manifold. Let $f \co M^3 \to
\R^2$ be a smooth stable map. By \cite{Lev1} cusp singularities can be eliminated by homotopy
(but now elimination by cobordism is enough). 
Hence, we obtain a fold map $f_1 \co M_1^3 \to
\R^2$ whose source manifold is oriented cobordant to $M^3$.
By the surjectivity of $\phi_2$, we have a simple fold map $f_2 \co M_2^3 \to \R^2$
whose source manifold is oriented cobordant to $M^3$.
Now by Proposition~\ref{puncrepr}
$M_2^3$ is oriented cobordant to a trivial $2$-sphere bundle, and therefore the
manifold $M^3$ is oriented null-cobordant.
(Compare with \cite{CoTh}.)
\end{rem}

%We have the natural forgetting homomorphisms
%\[
%\ga_n^N \co \imm^{\ep^1}_N(n-1,1) \oplus \imm^{\ep^1 \x \ga^1}_N(n-2,2) 
%\to \imm_N(n-1,1) \oplus \imm^{\ga^1 \x \ga^1}_N(n-2,2),
%\]
%\[
%\ga_{n,1}^N \co \imm^{\ep^1}_N(n-1,1)  \to \imm^{}_N(n-1,1) 
%\]
%and
%\[
%\ga_{n,2}^N \co \imm^{\ep^1 \x \ga^1}_N(n-2,2) \to \imm^{\ga^1 \x \ga^1}_N(n-2,2).
%\]
%Let $\pi_{n,2}^N \co \CC ob_{s}(N^n) \to \imm^{\ep^1 \x \ga^1}_N(n-2,2)$
%denote the composition of $\mathcal I_N$ with the projection to the second factor.

%Note that the case of $n=6$ in the previous proposition follows immediately since by 
%Corollary~\ref{stablekov} 
%the group $\CC ob_{\s}(\R^6)$ is zero.

As a corollary in Section~\ref{ketto}, we give another proof to the 
main result of \cite{Kal2} together with some geometric invariants,
 namely we have

\begin{thm}\label{3to2ujbiz}
The oriented cobordism group $\CC ob_{\f}(\R^2)$ of fold maps of 
$3$-manifolds into the plane is isomorphic to
$\Z_2 \oplus \Z_2$ through the homomorphism $\CC ob_{\f}(\R^2) \to \imm(1, 1) \oplus \imm(1, 1)$
which assigns to a class $[f]$ the sum 
$[f|_{S_0(f)}] \oplus [f|_{S_1(f)}]$
of the immersions of the definite and the indefinite fold singular sets.
\end{thm}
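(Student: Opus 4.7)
The strategy is to deduce the theorem from the preceding results in the paper. By Theorem~\ref{fotetel} and Corollary~\ref{stablekov}(1), $\CC ob_\s(\R^2) \cong \{S^2, S^1\} \oplus \{S^2, S\RP^{\infty}\} \cong \pi_1^s \oplus \pi_1^s(\RP^{\infty}) \cong \Z_2 \oplus \Z_2$; combined with the preceding proposition (stating that $\phi_2$ is an isomorphism), this yields $\CC ob_\f(\R^2) \cong \Z_2 \oplus \Z_2$. Moreover $\imm(1,1) \cong \pi_2^s(T\ga^1) \cong \pi_2^s(\RP^{\infty}) \cong \Z_2$, so the target $\imm(1,1) \oplus \imm(1,1)$ is also $\Z_2 \oplus \Z_2$. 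The problem thus reduces to verifying that the homomorphism $[f] \mapsto [f|_{S_0(f)}] \oplus [f|_{S_1(f)}]$ is an isomorphism between two groups of order $4$, for which it suffices to exhibit generators whose images span $\imm(1,1) \oplus \imm(1,1)$.

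To construct such generators, I would use Proposition~\ref{puncrepr}, which produces representatives $g \co S^2 \x S_1 \to \R^2$ of classes in $\CC ob_\s(\R^2)$ whose fiber restriction is a composition $u \circ v$ of an embedding $u \co \R \to \R^2$ and a Morse function $v$ on $S^2$ with one indefinite and three definite critical points. Consequently, the definite singular set is three parallel copies of $S_1$ and the indefinite singular set is one copy of $S_1$, both realized as sections of a family $\{u_s(\R)\}_{s \in S_1}$ of embedded lines. By choosing $S_1 \hookrightarrow \R^2$ to realize the generators of the two summands $\pi_1^s$ and $\pi_1^s(\RP^{\infty})$ of $\CC ob_\s(\R^2)$ and computing the resulting invariants $[g|_{S_0}], [g|_{S_1}] \in \imm(1,1)$, one verifies that the homomorphism has full image. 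An alternative, potentially cleaner, route uses the commutative diagram~(\ref{invariinlc}) together with the refined invariants $\mathcal{D}_\f(\R^2)$ and $\mathcal{I}_\f(\R^2)$ of Section~\ref{foldgerms}: the homomorphism of the theorem factors as these refined invariants followed by forgetful maps to $\imm(1,1)$, and one analyzes each forgetful map separately through Thom spaces.

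The main obstacle is confirming surjectivity (equivalently, injectivity) of the homomorphism, i.e., that $[f|_{S_0}]$ and $[f|_{S_1}]$ give independent invariants. For the standard representatives from Proposition~\ref{puncrepr}, the three definite curves and the single indefinite curve are sections of a common family $\{u_s(\R)\}$ of embedded lines, hence are mutually cobordant as immersions in $\imm(1,1)$; extracting independent information therefore requires either carefully choosing non-standard representatives that decouple the two singular sets, or passing through the finer normal bundle data encoded by $\mathcal{D}_\f(\R^2)$ and $\mathcal{I}_\f(\R^2)$ and separately analyzing how each summand of $\CC ob_\s(\R^2)$ maps to the respective factor of $\imm(1,1) \oplus \imm(1,1)$.
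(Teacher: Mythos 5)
Your overall strategy matches the paper's: reduce to showing that the map to $\imm(1,1)\oplus\imm(1,1)\cong\Z_2\oplus\Z_2$ is surjective, using Proposition~\ref{puncrepr} to produce representatives, after having established that $\phi_2$ is an isomorphism (the paper does this via Lemma~\ref{elsolem} for injectivity and Lemma~\ref{masodiklem}, a separate singular-fiber elimination argument, for surjectivity — you implicitly assume this by citing the summary proposition). So the skeleton is right. However, your proposal stops short at precisely the step that needs to be argued, and the concern you raise to justify stopping rests on a misconception.

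You assert that for the standard representatives of Proposition~\ref{puncrepr} ``the definite singular set is three parallel copies of $S_1$'' and that the definite and indefinite curves ``are mutually cobordant as immersions in $\imm(1,1)$,'' concluding that one cannot decouple the two invariants without extra work. This is only true when the $\Z_2$-bundle over $S_1(g)$ is trivial. The family $\varrho$ has structure group $\Z_2$, and the nontrivial element swaps two of the three definite critical points. When the associated principal $\Z_2$-bundle over $S_1(g)$ is \emph{nontrivial}, the two swapped critical points do \emph{not} trace out two separate sections; instead they sweep out a single connected curve that double-covers $S_1(g)$. When this double-covering curve is perturbed into general position inside the tubular neighbourhood $i(E\Z_2 \times \R)$ of $g(S_1(g))$, it acquires exactly one transverse self-intersection point, so it represents the \emph{nontrivial} class in $\imm(1,1)\cong\Z_2$ regardless of the class of $g|_{S_1(g)}$. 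Concretely: take $g|_{S_1(g)}$ an embedded circle in $\R^2$ (trivial in $\imm(1,1)$) with nontrivial $\Z_2$-bundle; then $[g|_{S_1(g)}]=0$ but $[g|_{S_0(g)}]$ equals the class of the null-cobordant fixed-point section plus the nontrivial double-cover curve, i.e.\ $1$. Together with the representative where $g|_{S_1(g)}$ is a figure-eight (nontrivial) and the $\Z_2$-bundle is trivial, giving image $(1,1)$, the two images span $\Z_2\oplus\Z_2$. This is exactly the content of Lemma~\ref{elsolem}, and it is the missing step in your write-up: the ``obstacle'' you identified is illusory once one correctly accounts for the monodromy of the definite critical points.

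One more caution: citing ``the preceding proposition'' for $\phi_2$ being an isomorphism is logically borderline, because the paper proves that proposition for $n=2$ precisely via Lemmas~\ref{elsolem} and~\ref{masodiklem} in Subsection~\ref{ketto}, of which Theorem~\ref{3to2ujbiz} is the endpoint. To avoid apparent circularity you should invoke Lemmas~\ref{elsolem} and~\ref{masodiklem} directly, as the paper does.
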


By the surjectivity of the natural homomorphism 
$\phi_2 \co \CC ob_{\s}(\R^2) \to \CC ob_{\f}(\R^2)$,
we have

\begin{cor}
Non-simple singular fibers of an oriented fold map $f \co M^3 \to \R^2$
can be eliminated by cobordism. 
\end{cor}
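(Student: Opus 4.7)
The plan is to read this corollary as an essentially immediate consequence of the parenthetical hypothesis that precedes it, namely that $\phi_2 \co \CC ob_{\s}(\R^2) \to \CC ob_{\f}(\R^2)$ is surjective (in fact an isomorphism, by the proposition stated just before Theorem~\ref{3to2ujbiz}). First I would unpack the statement: ``Non-simple singular fibers of an oriented fold map $f \co M^3 \to \R^2$ can be eliminated by cobordism'' means that $[f] \in \CC ob_{\f}(\R^2)$ admits a representative that is a simple fold map, i.e.\ that $[f]$ lies in the image of $\phi_2$.

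Given $f \co M^3 \to \R^2$, I would then simply invoke surjectivity of $\phi_2$ to pick an oriented simple fold map $g \co M'^3 \to \R^2$ with $\phi_2([g]) = [f]$, and by Definition~\ref{cobdef} choose an oriented fold cobordism $F \co X^4 \to \R^2 \x [0,1]$ with $F|_{M^3 \x [0,\ep)} = f \x \mathrm{id}$ and $F|_{M'^3 \x (1-\ep,1]} = g \x \mathrm{id}$. Since $g$ has at most one singular point per connected component of each fiber, all non-simple singular fibers present in $f$ have been traded away across $F$ for simple ones in $g$, which is precisely the asserted elimination.

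The only real ``obstacle'' is verifying the cited surjectivity of $\phi_2$, but this is supplied by the earlier machinery: Theorem~\ref{fotetel} identifies $\CC ob_{\s}(\R^2)$ with $\imm^{\ep^1_{\RP^{\infty}}}(1,1)$, Theorem~\ref{3to2ujbiz} identifies $\CC ob_{\f}(\R^2)$ with $\Z_2 \oplus \Z_2$ via the invariants $[f|_{S_0(f)}] \oplus [f|_{S_1(f)}]$, and one checks that the two $\imm(1,1) \cong \Z_2$ summands are hit by the definite and indefinite singular set invariants of a simple fold representative. Accordingly no further construction is needed in the proof of the corollary itself; the whole content is the one-line application of surjectivity.
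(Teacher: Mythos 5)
Your proposal coincides with the paper's treatment of the corollary proper: the paper presents it with no separate proof, exactly as the one-line consequence of surjectivity of $\phi_2$ that you describe, and your unpacking of ``eliminated by cobordism'' as ``$[f]$ lies in the image of $\phi_2$'' is the intended reading. The only caveat concerns your sketch of \emph{where} the surjectivity comes from. The paper establishes it directly and geometrically in Lemma~\ref{masodiklem}: using the classification of singular fibers from \cite{Lev,Sa}, the fibers $\iii^4$ and $\iii^6$ supply explicit fold cobordisms that trade away the non-simple $\ii^2$ and $\ii^3$ fibers, and the conclusion is that every fold map is fold cobordant to a simple one. Theorem~\ref{3to2ujbiz} is then \emph{deduced} from that surjectivity together with Lemma~\ref{elsolem}, so invoking Theorem~\ref{3to2ujbiz} to justify the surjectivity would be circular within the logic of this paper. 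Your route can be made non-circular by taking $\CC ob_{\f}(\R^2)\cong\Z_2\oplus\Z_2$ from the external source \cite{Kal2} and combining it with Lemma~\ref{elsolem} in an order-counting argument; this is a legitimate alternative, but it is worth noticing that the actual geometric content of ``eliminating non-simple fibers by cobordism'' resides precisely in Lemma~\ref{masodiklem}, which exhibits the relevant cobordisms constructively rather than merely comparing group orders.
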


Analogous results about the oriented bordism groups of simple fold maps $Bor_{\s}(n+1,-1)$
can be found in Section~\ref{bord}. Moreover, we have the following.

\begin{thm}
The rank of the oriented simple bordism group $Bor_{\s}(n+1,-1)$ is equal to
$\rank{\Omega_n} + \sum_{q = 0}^{n} \rank{\Omega_q}$.
\end{thm}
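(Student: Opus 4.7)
The plan is to mirror the strategy of Theorem~\ref{fotetel}, replacing cobordism with bordism: rather than fixing the target $N^n$, one allows $N^n$ to vary and carries out a Pontryagin--Thom construction over the entire source--target pair.

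First, I would establish the bordism analogue of Theorem~\ref{fotetel}. A Pontryagin--Thom argument analogous to the one for $\mathcal{I}_{\s}$, but with $N$ allowed to vary in a bordism, should yield an identification
$$
Bor_{\s}(n+1,-1) \;\cong\; \Omega_n^{SO}(Y),
$$
where $Y$ is an appropriate classifying space built from the Thom spaces of the universal simple fold germ bundles. In the bordism setting this must take into account both the indefinite germ bundle (classified by $\ep^1 \to \RP^{\infty}$) and the definite germ bundle (classified by $\ep^1 \to \CP^{\infty}$) together with its multi-point strata, since when $N$ varies the definite data is no longer recoverable from the indefinite data as in the cobordism case. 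Proposition~\ref{puncrepr} normalizes representatives to the trivial $S^2$-bundle form, underpinning this Pontryagin--Thom correspondence.

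Second, I would compute the rank rationally using the Atiyah--Hirzebruch spectral sequence, which collapses rationally to give
$$
\Omega_n^{SO}(Y) \otimes \Q \;\cong\; \bigoplus_p H_p(Y;\Q) \otimes (\Omega_{n-p}\otimes\Q).
$$
The forgetful map $[f \colon Q^{n+1} \to N^n] \mapsto [N^n] \in \Omega_n$ is a split surjection, sectioned by $[N] \mapsto [\pi\colon N \times S^1 \to N]$ (a trivial submersion, hence a simple fold map with empty singular set); this contributes the summand $\rank{\Omega_n}$. The remaining $\sum_{q=0}^{n} \rank{\Omega_q}$ should then be identified with the stratified bordism invariants coming from the image of the singular set in $N$: a $q$-dimensional stratum (ambient region of regular values, codimension-one singular image, double-point locus, and so on down to the top multi-point stratum) contributes one copy of $\Omega_q$ for each $q = 0, 1, \ldots, n$.

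The main obstacle is to pin down the classifying space $Y$ precisely enough to read off exactly the stated rank, and to verify that the stratified contributions do not overlap. Since for cobordism only the indefinite Thom space $T\ep^1_{\RP^{\infty}}$ was needed but for bordism the definite part and its iterated multi-point strata contribute additional rational invariants, the computation hinges on a careful homotopy-theoretic identification of $Y$ and its rational Betti numbers. Once this is done, the rank formula follows from AHSS together with Thom's theorem on the rational structure of $\Omega_*^{SO}$.
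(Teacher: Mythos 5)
Your general strategy is correct and matches the paper's: reduce $Bor_{\s}(n+1,-1)$ to an oriented bordism group of a classifying space via a Pontryagin--Thom argument, then compute the rank with the rational Conner--Floyd/AHSS formula $\Omega_n(X)\otimes\Q \cong \bigoplus_{p+q=n}H_p(X;\Q)\otimes\Omega_q$. However, your identification of the classifying space is off in a way that would derail the rank computation. You assert that the bordism classifying space must record the definite germ bundle (over $\CP^\infty$) and ``its multi-point strata'' because ``when $N$ varies the definite data is no longer recoverable.'' This is not how the paper proceeds, nor is it needed. The bordism analogue of Theorem~\ref{fotetel} (Theorem~\ref{borizom} in the paper) again uses only the \emph{indefinite} singular set, giving $Bor_{\s}(n+1,-1)\cong B\imm^{\ep^1}(n-1,1)\oplus B\imm^{\ep^1\times\ga^1}(n-2,2) \cong \Omega_n(\Gamma_{\ep^1})\oplus\Omega_n(\Gamma_{\ep^1\times\ga^1})$, where the splitting is the bordism version of $\theta_{\s}$: the first summand is the underlying immersion of the indefinite set, the second is the Poincar\'e dual of $w_1$ of the labelling line bundle. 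The space $\CP^\infty$ never enters.

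The two pieces you are missing are then exactly the two rational homology facts the paper uses. First, by Sz\H{u}cs, $\Omega_n(\Gamma_{\ep^1})\otimes\Q\cong\bigoplus_{q=0}^n\Omega_q\otimes\Q$, which accounts for $\sum_{q=0}^{n}\rank\Omega_q$; your ``stratified'' heuristic roughly captures this but is not a proof. Second --- and this is the step your proposal has no handle on --- the group $\imm^{\ep^1\times\ga^1}(n-2,2)\cong\pi_n(\Gamma_{\ep^1\times\ga^1})$ is finite and $2$-primary by Wells, so the reduced homology $\tilde H_*(\Gamma_{\ep^1\times\ga^1};\Z)$ is also finite and $2$-primary; therefore $\Omega_n(\Gamma_{\ep^1\times\ga^1})\otimes\Q\cong\Omega_n\otimes\Q$, contributing the extra $\rank\Omega_n$. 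Your observation about the split section $[N]\mapsto[N\times S^1\to N]$ is correct as far as it goes, but it only locates one $\Omega_n$ factor and does not explain why the remaining contribution of the second summand is rationally trivial. Without the Wells finiteness input, your computation cannot close.
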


%A corollary is that a source manifold of a simple fold map 
%is stable parallelisable outside of its definite singular set. More precisely we have the following.
%\begin{thm}
%Let  $f \co Q^{n+1} \to \R^n$ be a simple fold map.
%Let $Q_0^{n+1}$ denote the manifold $Q^{n+1} \setminus S_0$.
%
%Then the bundle $TQ_0 \oplus \ep^1$ is trivial.
%\end{thm}
%\begin{proof}
%...
%\end{proof} 

%\section{Puncturing fold maps}\label{puncmaps}
\section{Proof of main theorems}\label{mainthmproof}

First let us prove Proposition~\ref{puncrepr}. 
\begin{proof}[Proof of Proposition~\ref{puncrepr}]
%We have to prove that
%if $f \co Q^{n+1} \to N^n$ is a simple fold map of an oriented $(n+1)$-dimensional
%manifold $Q^{n+1}$ into an $n$-dimensional manifold $N^n$ 
%then the map $f$ is simple cobordant to a simple fold map $f' \co Q'^{n+1} \to N^n$ where the
%source manifold $Q'^{n+1}$ is a trivial two-sphere bundle over the indefinite singular set of the map $f$.

For a given simple fold map $f \co Q_0^{n+1} \to N^n$
let us construct a fold map $g \co Q^{n+1} \to N^n$ by \cite[Proposition~3.4]{Kal2}. It is easy to see that the fold map $g$ is also simple, and if the fold map $f$ is oriented, then so is $g$. 
Moreover, we can suppose that the indefinite fold singular set of $f$ is not empty (otherwise $f$ is null-cobordant as one can see easily), and by \cite[Proposition~3.4]{Kal2} it coincides with the
indefinite fold singular set of $g$.

Let $V$ denote a fiber of the normal bundle of the $g$-image $g(S_1)$ 
of the indefinite fold singular set $S_1$ of the map $g$ in the manifold $N^n$.
Let us suppose that the arc $V$ is transversal to the immersions into $N^n$ of the definite and indefinite fold singular sets of $g$, and intersects the set $g(S_1)$ only in one point $p$ which is not a multiple point of the immersion $g |_{S_1}$.

It is enough to remark that since the fold map $g$ is oriented and simple,
the arc $V$ can be chosen so that the component of the preimage $g^{-1}(V)$ 
 containing the ``figure eight" of the simple singular fiber $(g^{-1}(V), g^{-1}(\{p\})) \to (V,p)$
is a two dimensional sphere $S^2$, and the restriction 
of $g$ to this sphere is a Morse function with three definite and one indefinite critical points. 

Hence the source manifold $Q^{n+1}$ of the fold map $g$
is an $S^2$-bundle over $S_1$, the map $g$ restricted to any fiber $S^2$ is a Morse function
with three definite and one indefinite critical points, and the map $g$ is a composition of a (possibly non-trivial) family $Q^{n+1} \to \R \x S_1$ of such Morse functions parametrized by the indefinite fold singular set $S_1$ and an immersion $\R \x S_1 \to N^n$ onto the regular neighbourhood of $g(S_1)$.

Moreover, it is easy to see that the structure group of this sphere bundle can be reduced to $Z_2$ (essentially because of the ``figure eight'' subbundle of the sphere bundle coming from the Morse function family\footnote{
For a more precise argument, see  Theorem~\ref{strcsopveges} in Appendix.
}),
where the non-trivial element of $\Z_2$ acts on the fiber $S^2$ as a $180$ degree rotation,
hence this $S^2$-bundle is trivial. 
\end{proof}

%\begin{rem}
%...
%Minden generator $S^2$ nyalab lekepezese.
%...
%\end{rem}

%If we have a fold map $f \co Q^{n+1} \to N^n$,  
%then let us cut out the restriction of $f$
%to a small neighbourhood of its definite singular set $S_0$. (We remark that
%this neighbourhood and its definite fold map into $N^n$
%can be glued back in a unique way up to cobordism.)
%In this way, we obtain a fold map $f \co P^{n+1} \to N^n$ from a manifold $P^{n+1}$ with 
%an $S^1$ bundle as boundary and with the following properties.
%%and we get the following definition.

%\begin{enumerate}[1)]
%\item
%The map ${f \mid}_{int(P)}$ is a fold map such that the preimage
%${f \mid}_{int(P)}^{-1}(x)$ of any point $x$ in $N^n$ is a compact $1$-dimensional 
%CW-complex, the fiber-singularities of the map ${f \mid}_{int(P)}$ are
%in the set $\tau$ and the map ${f \mid}_{int(P)}$ does not have 
%definite fiber-singularities,
%\item
%the image $f(int(P))$ is contained in a small regular neighbourhood of the image 
%$f(S_1)$ of the indefinite singular set,
%\item
%$\del P$ is a circle bundle 
%and if we glue the standard definite fold map of 
%the appropriate $D^2$-bundle by its boundary to $f$ 
%(note that the diffeomorphism group
%of the circle $S^1$ can be reduced to the orthogonal group $O(2)$),
%then we obtain a punctured $\tau$-map.  
%\end{enumerate}

%By Proposition~\ref{lyuk}, and by cutting out and gluing back the neighbourhood of the definite singular set, 
%we identify the representatives of the fold cobordism classes with the fold maps
%from manifolds with boundary of the above form 
%$f \co P^{n+1} \to N^n$ satisfying (1)--(3).

Now let us prove Theorem~\ref{fotetel}.

\begin{proof}[Proof of Theorem~\ref{fotetel}]\label{geomkod}

Here, we give the proof only in the case of fold maps into $\R^n$ but the techniques and the results
are the same when the target manifold is an arbitrary $n$-dimensional manifold.

First, we rephrase our original problem about the cobordism group of simple fold maps.
By Proposition~\ref{puncrepr}, we can always choose a representative $g$ for an arbitrary simple fold cobordism class in $\CC ob_{\s}(\R^n)$, such that $g$ is equal to the composition 
\[
\begin{CD}
EG \x_G S^2  @>\varrho>> EG \x \R @> i >> \R^n
\end{CD}
\]
where 
\begin{enumerate}[1)]
\item
the map $\varrho \co EG \x_G S^2 \to EG \x \R$ is a family\footnote{Which depends on $g$. This family is locally trivial in a sense, hence it is a ``bundle'' over the indefinite singular set $S_1(g)$ with a Morse function as fiber and with structure group $\Z_2$, for details, see Appendix.} of the Morse function $h \co S^2 \to \R$ of the $2$-sphere $S^2$ parametrized by the indefinite singular set $S_1(g)$, where the Morse function $h$ has three definite and one indefinite critical points,
\item 
$G$ is the structure group of this family\footnote{Since this family is a ``locally trivial bundle'', it has ``structure group'', see Appendix.}, which is 
isomorphic to $\Z_2$,
where the non-trivial element of $\Z_2$ acts on the ``fiber'' 
$h \co S^2 \to \R$ as a rotation of 180 degree on $S^2$ and identically on $\R$ keeping fixed the indefinite critical point and one definite critical point, and
\item
the map $i \co EG \x \R \to \R^n$
is an immersion onto a tubular neighbourhood of $g(S_1(g))$ such that $i |_{EG \x \{0\}}$ corresponds to $g |_{S_1(g)}$.
\end{enumerate}

Moreover, by the same process, a simple fold cobordism $F \co X^{n+2} \to \R^n \x [0,1]$ between two such representatives $g_0$ and $g_1$ can be chosen to have a form 
\[
\begin{CD}
EG \x_G S^2  @> \Upsilon>> EG \x \R @> j >> \R^n \x [0,1]
\end{CD}
\]
with the analogous properties, i.e. it is a composition of a family $\Upsilon$ of the Morse function $h$ with structure group $\Z_2$ parametrized by the indefinite singular set of $F$ and an immersion $j$, compatible with $g_0$ and $g_1$ near the boundary $\del X^{n+2}$.

Hence, we have a homomorphism
\begin{equation}\label{muhomom}
\mu_n \co \CC ob_{\s}(\R^n) \to \imm^{\ep^1_{\RP^{\infty}}}(n-1,1)
\end{equation} 
by sending a cobordism class $[g]$ $(=[i \circ \varrho])$ to the immersion of the indefinite singular set $S_1(g)$ with normal bundle induced from the trivial line bundle over $\RP^{\infty}$ $(=B\Z_2)$ according to the principal $\Z_2$ bundle over $S_1(g)$ obtained from the $\Z_2$-symmetry of the family $\varrho$.

\begin{prop}
The homomorphism $\mu_n$ is an isomorphism.
\end{prop}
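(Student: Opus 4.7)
The plan is to construct an explicit inverse
\[
\nu_n \co \imm^{\ep^1_{\RP^\infty}}(n-1,1) \to \CC ob_{\s}(\R^n)
\]
and verify that both compositions $\mu_n \circ \nu_n$ and $\nu_n \circ \mu_n$ are the identity, leaning crucially on Proposition~\ref{puncrepr} for the second of these.

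Given an immersion $\iota \co V^{n-1} \to \R^n$ equipped with a classifying map $c \co V \to \RP^{\infty}$ that induces its (automatically trivial) normal bundle from $\ep^1_{\RP^{\infty}}$, let $\tilde V \to V$ denote the associated principal $\Z_2$-bundle. Fix a Morse function $h \co S^2 \to \R$ with one indefinite and three definite critical points, invariant under the $180$-degree rotation of $S^2$ which fixes the indefinite critical point and one of the definite ones; this gives a $\Z_2$-action on $S^2$ commuting with $h$. Choose an immersion $\iota_{+} \co V \x (-\ep,\ep) \to \R^n$ onto a tubular neighbourhood of $\iota(V)$. Then set $\nu_n([\iota,c])$ equal to the cobordism class of the composition
\[
\tilde V \x_{\Z_2} S^2 \xrightarrow{[\mathrm{id},h]} \tilde V \x_{\Z_2} \R \;=\; V \x \R \xrightarrow{\iota_+} \R^n,
\]
which is manifestly an oriented simple fold map. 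A cobordism of immersions carrying the $\ep^1_{\RP^{\infty}}$-structure will, by the same bundle construction applied to the $(n-1+1)$-dimensional cobordism manifold, yield a simple fold cobordism of the associated maps; hence $\nu_n$ descends to a well-defined semigroup homomorphism.

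The equality $\mu_n \circ \nu_n = \mathrm{id}$ is nearly tautological: in the fold map just built, the indefinite fold singular set equals $\tilde V \x_{\Z_2} \{x_{\mathrm{indef}}\} \cong V$, it is immersed into $\R^n$ by $\iota$, and the $\Z_2$-symmetry of its Morse function family is exactly the principal bundle $\tilde V \to V$ we started with. The opposite composition $\nu_n \circ \mu_n = \mathrm{id}$ is where Proposition~\ref{puncrepr} enters essentially: any class in $\CC ob_{\s}(\R^n)$ admits a representative of the precise form $EG \x_G S^2 \to EG \x \R \to \R^n$ displayed just above the statement of the present proposition. Applying $\mu_n$ to such a representative reads off its indefinite singular set together with the $\Z_2$-bundle provided by the symmetry of the Morse family, and feeding that data into $\nu_n$ manifestly reproduces the same map up to an obvious cobordism.

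The hard part, as I see it, will be the rigorous identification of the $\Z_2$-structure group of the Morse function family associated to a standard representative $g$ with the classifying $\Z_2$-bundle of the normal bundle of $g|_{S_1(g)}$ regarded as an $\ep^1_{\RP^{\infty}}$-structured immersion, together with the parallel identification at the level of cobordisms. The structure-group reductions worked out in Section~\ref{foldgerms}, together with Theorem~\ref{strcsopveges} in the Appendix invoked in the proof of Proposition~\ref{puncrepr}, are precisely what is needed to make these identifications go through; once they are in place, both compositions $\mu_n\circ\nu_n$ and $\nu_n\circ\mu_n$ collapse to the identity and $\mu_n$ is an isomorphism.
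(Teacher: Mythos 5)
Your proposal is correct and follows essentially the same route as the paper's (very terse) proof: the paper establishes surjectivity by exactly the $\Z_2$-family construction you package as $\nu_n$, and then asserts injectivity in one line, which your argument fills in by observing that Proposition~\ref{puncrepr} puts any representative into the canonical $EG\times_G S^2 \to EG\times\R \to \R^n$ form that $\nu_n$ manifestly reproduces.
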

\begin{proof}
The homomorphism $\mu_n$ is surjective because for a given immersion $i \co M^{n-1} \to \R^n$ with normal bundle $\nu$ induced from the bundle $\ep^1 \to  \RP^{\infty}$, we can construct a family 
$\varrho \co E\Z_2 \x_{\Z_2} S^2  \to E\Z_2 \x \R$ of the Morse function $h \co S^2 \to \R$ which corresponds to the principal $\Z_2$ bundle over $M^{n-1}$ obtained from the inducing map 
$M^{n-1} \to  \RP^{\infty}$ of the normal bundle $\nu$.
The homomorphism $\mu_n$ is clearly injective as well.
\end{proof}

The observation that the homomorphism $\mu_n$ coincides with the homomorphism ${{\mathcal I}_{\s}(\R^n)}$
 completes the proof of Theorem~\ref{fotetel}.
\end{proof}

%(For the proof of the second part, see Remark~\ref{ori3mfdnullcob} and the text after it.)

\subsection{Representing the natural homomorphism $\CC ob_{\s}(N^n) \to \CC ob_{\f}(N^n)$}

Let \[\phi_n^N \co \CC ob_{\s}(N^n) \to \CC ob_{\f}(N^n)\] 
($\phi_n$ in the case of $N^n = \R^n$)
denote the natural homomorphism
which maps a simple fold cobordism class into its fold cobordism class.

We define the homomorphism 
\[
\theta_{\f}^N \co \imm^{{\mathrm {det}}(\ga^1 \x \ga^1)}_N(n-1,1)
\to \imm_N(n-1,1) \oplus \imm^{\ga^1 \x \ga^1}_N(n-2,2)
\]
as follows.
Let $[h \co M^{n-1} \to N^n]$ be an element of $\imm^{{\mathrm {det}}(\ga^1 \x \ga^1)}_N(n-1,1)$.
Then the natural forgetting homomorphism 
$\iota \co \imm^{{\mathrm {det}}(\ga^1 \x \ga^1)}_N(n-1,1) \to \imm_N(n-1,1)$ gives 
a class $[h_1] = \iota([h])$ in $\imm_N(n-1,1)$.
Furthermore, we induce the normal bundle $\nu(h)$ of the immersion $h$ from
the bundle
${\mathrm {det}}(\ga^1 \x \ga^1) \to \RP^{\infty} \x \RP^{\infty}$,
i.e., we have the fiberwise isomorphism
\[
\begin{CD}
\nu(h) @>>> {\mathrm {det}}(\ga^1 \x \ga^1) \\
@VVV @VVV \\
M^{n-1} @> \tilde \nu_h >> \RP^{\infty} \x \RP^{\infty}.
\end{CD}
\]
By composing the map $\tilde \nu_h$ with the projection to the first factor 
$\pi_1 \co \RP^{\infty} \x \RP^{\infty} \to \RP^{\infty}$, 
we obtain a line bundle $\la^1$ over $M^{n-1}$ by the diagram
\[
\begin{CD}
\la^1 @>>> \ga^1 \\
@VVV @VVV \\
M^{n-1} @> \pi_1 \circ \tilde \nu_h >> \RP^{\infty}.
\end{CD}
\]
This line bundle $\la^1$ over the $(n-1)$-dimensional manifold $M^{n-1}$
gives an $({n-2})$-dimensional manifold $L^{n-2}$ and an embedding 
$P_{\la^1} \co L^{n-2} \hookrightarrow M^{n-1}$,
which represents the Poincar\'e dual to the first Stiefel-Whitney class $w_1(\la^1)$. %the usual Pontryagin-Thom construction. 
Hence, we obtain an immersion $h_2 = h \circ P_{\la^1} \co L^{n-2} \to N^n$ with
normal bundle induced from the bundle $\ga^1 \x \ga^1$ such that the normal bundle of $h$ corresponds to the first $\ga^1$ term and the normal bundle of $P_{\la^1}$ corresponds to the second $\ga^1$ term, i.e., an 
element of
$\imm^{\ga^1 \x \ga^1}_N(n-2,2)$. Now let us define the homomorphism
$$\theta_{\f}^N \co \imm^{{\mathrm {det}}(\ga^1 \x \ga^1)}_N(n-1,1)
\to \imm_N(n-1,1) \oplus \imm^{\ga^1 \x \ga^1}_N(n-2,2)$$ by
$\theta_{\f}^N([h]) = [h_1] \oplus [h_2]$.

We define the homomorphism 
$$\theta_{\s}^N \co  \imm^{\ep^1_{\RP^{\infty}}}_N(n-1,1)  \to 
\imm^{\ep^1}_N(n-1,1) \oplus \imm^{\ep^1 \x \ga^1}_N(n-2,2)$$
similarly, i.e., for an element $[g \co M^{n-1} \to N^n]$ of the group 
$\imm^{\ep^1_{\RP^{\infty}}}_N(n-1,1)$ let $[g_1]$ be the image of $[g]$ under the
natural forgetting homomorphism $\imm^{\ep^1_{\RP^{\infty}}}_N(n-1,1) \to  \imm^{\ep^1}_N(n-1,1)$, 
and let $[g_2] =  [g \circ P_{\ka^1} \co K^{n-2} \to N^n]$, where the embedding 
$$P_{\ka^1} \co K^{n-2}  \hookrightarrow M^{n-1}$$ represents the 
Poincar\'e dual to the first Stiefel-Whitney class of the
line bundle $\ka^1 \to M^{n-1}$ obtained from the universal line bundle $\ga^1 \to \RP^{\infty}$ by the 
map $M^{n-1} \to  \RP^{\infty}$ which induces the normal
bundle of the immersion $g \co M^{n-1} \to N^n$ from $\ep^1_{\RP^{\infty}}$.

Let $\ga_n^N$ denote the natural forgetting homomorphism
\[
\imm^{\ep^1}_N(n-1,1) \oplus \imm^{\ep^1 \x \ga^1}_N(n-2,2) 
\to \imm_N(n-1,1) \oplus \imm^{\ga^1 \x \ga^1}_N(n-2,2)
\]
induced by the inclusion 
$\RP^{\infty} \hookrightarrow \RP^{\infty} \x \RP^{\infty}$,
$x \mapsto (*,x)$. 

By the above constructions and by the diagram~\eqref{invariinlc},
we have the commutative diagram
\begin{equation}\label{invaridiag}
\begin{CD}
\CC ob_{\s}(N^n) @> {{\mathcal I}_{\s}(N^n)}>>  \imm^{\ep^1_{\RP^{\infty}}}_N(n-1,1) @> \theta_{\s}^N >> \imm^{\ep^1}_N(n-1,1) \oplus \imm^{\ep^1 \x \ga^1}_N(n-2,2) \\
@VV \phi_n^N V @VVV @V \ga_n^N  VV \\
\CC ob_{\f}(N^n) @>{{\mathcal I}_{\f}(N^n)} >> \imm^{{\mathrm {det}}(\ga^1 \x \ga^1)}_N(n-1,1) @> \theta_{\f}^N >> \imm^{}_N(n-1,1) \oplus \imm^{\ga^1 \x \ga^1}_N(n-2,2).
\end{CD}
\end{equation}

\begin{lem}
The homomorphism $\theta_{\s}^N$ is an isomorphism.
\end{lem}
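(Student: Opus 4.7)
The plan is to apply the Pontryagin--Thom/Wells identification to translate the lemma into a splitting of Thom spectra, and then to exhibit $\theta_{\s}^N$ as the projection onto the two wedge summands coming from the stable splitting $\RP^{\infty}_+\simeq\RP^{\infty}\vee S^0$.

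First I would invoke Wells' theorem to identify
\[
\imm^{\ep^1_{\RP^{\infty}}}_N(n-1,1)\cong\{\dot N, T(\ep^1_{\RP^{\infty}})\},\quad
\imm^{\ep^1}_N(n-1,1)\cong\{\dot N, S^1\},
\]
and $\imm^{\ep^1\x\ga^1}_N(n-2,2)\cong\{\dot N, T(\ep^1\x\ga^1)\}$. Using $T(\ep^1_X)=\Si X_+$, $T(\ga^1)\simeq\RP^{\infty}$, and $T(\ep^1\x\ga^1)=\Si T(\ga^1)=S\RP^{\infty}$, together with the standard stable splitting $\RP^{\infty}_+\simeq\RP^{\infty}\vee S^0$, one obtains
\[
T(\ep^1_{\RP^{\infty}})=\Si\RP^{\infty}_+\simeq S\RP^{\infty}\vee S^1 \simeq T(\ep^1\x\ga^1)\vee T(\ep^1).
\]
Hence $\{\dot N, T(\ep^1_{\RP^{\infty}})\}\cong\{\dot N, S^1\}\oplus\{\dot N, S\RP^{\infty}\}$, which is an abstract isomorphism between the source and the target of $\theta_{\s}^N$. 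It then remains to show that $\theta_{\s}^N$ really agrees with the two wedge-projection maps under these identifications.

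Next I would identify the two components of $\theta_{\s}^N$ with the wedge projections. The forgetting homomorphism is induced on Thom spaces by the constant map $\RP^{\infty}\to\mathrm{pt}$, which collapses $\RP^{\infty}\subset\RP^{\infty}_+$ to the added basepoint; under $\RP^{\infty}_+\simeq\RP^{\infty}\vee S^0$ this is the projection onto $S^0$, so after suspending it is the projection onto the $S^1$ summand. The Poincar\'e-dual component corresponds, at the Thom-space level, to the map induced by the bundle inclusion $\ep^1_{\RP^{\infty}}\hookrightarrow\ep^1\oplus\ga^1$ as the trivial summand over $\mathrm{id}_{\RP^{\infty}}$, which is the other wedge projection $\Si\RP^{\infty}_+\to\Si T(\ga^1)=S\RP^{\infty}$. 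To verify this geometrically: if $\phi\co\dot N\to T(\ep^1_{\RP^{\infty}})$ is made transverse to the zero section and represents $[g\co M^{n-1}\to N]$, the classifying map $M\to\RP^{\infty}$ of $\ka^1$ is read off from $\phi$; composing $\phi$ with the bundle inclusion above and taking a further transverse preimage of the zero section of $\ep^1\oplus\ga^1$ yields the preimage of $\RP^{\infty-1}\subset\RP^{\infty}$ inside $M$, which is by definition a representative of the Poincar\'e dual of $w_1(\ka^1)$, i.e.\ the embedding $P_{\ka^1}\co L^{n-2}\hookrightarrow M^{n-1}$.

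The main obstacle is this last identification: verifying that the iterated transversality procedure commutes with the composition of Thom-space maps in the sense that one recovers the geometric Poincar\'e-dual representative $P_{\ka^1}$ up to the correct cobordism, so that the second component of $\theta_{\s}^N$ really is the Thom-space map induced by $\ep^1_{\RP^{\infty}}\hookrightarrow\ep^1\oplus\ga^1$. Once this is checked, $\theta_{\s}^N$ becomes literally the direct sum of the two wedge projections in the stable splitting of $T(\ep^1_{\RP^{\infty}})$, and hence an isomorphism.
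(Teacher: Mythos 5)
Your proposal is correct and follows essentially the same approach as the paper: identify everything with stable homotopy groups via Pontryagin--Thom/Wells, split $T\ep^1_{\RP^\infty}=\Si\RP^\infty_+\simeq S^1\vee S\RP^\infty$, and observe that $\theta_{\s}^N$ is realized by the two wedge projections. The paper states this very tersely ("by the construction of $\theta_{\s}^N$ the statement follows easily"), while you spell out the geometric verification that the Poincar\'e-dual component corresponds to the second wedge projection -- a useful elaboration, but the same idea.
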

\begin{proof}
By the construction of $\theta_{\s}^N$ the statement follows easily. In fact, the homomorphism $\theta_{\s}^N$
yields the isomorphism $\{ \dot N, T\ep^1_{\RP^{\infty}} \} = \{ \dot N, S^1 \vee S\RP^{\infty} \} = 
\{ \dot N, S^1 \} \oplus \{ \dot N, S\RP^{\infty} \}$ under
the identifications given by the Pontryagin-Thom construction.
\end{proof}

%\subsubsection{Proof of the first part of Theorem~\ref{injdontes}}

\subsection{Proof of Theorem~\ref{injdontes}}\label{invari}

\begin{proof}[Proof of Theorem~\ref{injdontes}]

We define a homomorphism $$\psi(N^n) \co \imm^{\ep^1_{\RP^{\infty}}}_N(n-1,1) \to 
\CC ob_{\f}^{\fr}(N^n)$$ so that the composition ${{\mathcal I}_{\f}^{\fr}(N^n)} \circ \psi(N^n)  \co
 \imm^{\ep^1_{\RP^{\infty}}}_N(n-1,1) \to  \imm^{\ep^1_{\RP^{\infty}}}_N(n-1,1)$ is the identity map, and
the composition $$\imm^{\ep^1_{\RP^{\infty}}}_N(n-1,1) \plto{\psi(N^n)} \CC ob_{\f}^{\fr}(N^n) \plto{\al} \CC ob_{\f}(N^n)
\plto{{\mathcal D}_{\f}(N^n)}  \imm^{\ep^1_{\CP^{\infty}}}_N(n-1,1),$$where $\al$ is the natural forgetting
homomorphism,
maps the direct summand $\imm^{\ep^1}_N(n-1,1)$ of $\imm^{\ep^1_{\RP^{\infty}}}_N(n-1,1)$ 
isomorphically onto the direct summand 
$\imm^{\ep^1}_N(n-1,1)$ of $\imm^{\ep^1_{\CP^{\infty}}}_N(n-1,1)$. 
Namely, let $\psi(N^n)$ be the inverse of \eqref{muhomom}\footnote{The homomorphism 
\eqref{muhomom} was defined only in the case of $N^n = \R^n$ but obviously we have its version for an arbitrary
manifold $N^n$.}, which maps an element in $\imm^{\ep^1_{\RP^{\infty}}}_N(n-1,1)$  into the cobordism class of the
corresponding representative $g$ (see the proof of Theorem~\ref{fotetel}). This gives us 
(1) of Theorem~\ref{injdontes}, and also (2) if we observe in the proof of Theorem~\ref{fotetel} that the three immersions of the definite fold singular set
of the representative $g$ (which correspond to the three definite critical points of the Morse function $h$),
where the structure group of the Morse function family is trivial, are parallel to the immersion $g|_{S_1(g)}$ of the indefinite 
fold singular set of $g$ and their sum represents the inverse of $[g|_{S_1(g)}]$ (see also the proof of \cite{Kal4}).
Because of the diagram~\eqref{invaridiag} and since the homomorphisms 
${{\mathcal I}_{\s}(N^n)}$ and $\theta_{\s}^N$ are isomorphisms, by using (2), we obtain (3). Finally, (4)
is obvious from Proposition~\ref{nullkob} (which we proved independently from Theorem~\ref{injdontes}).
%Under this identification the diagram~\ref{invaridiag} can be reduced to the commutative diagram
%\begin{center}
%\begin{graph}(6,3)
%\graphlinecolour{1}\grapharrowtype{2}
%\textnode {A}(-1,2.5){$\CC ob_{\s}(N^n)$}
%\textnode {B}(7, 2.5){$\imm_N(n-1,1) \oplus \imm^{\ga^1 \x \ga^1}_N(n-2,2)$}
%\textnode {C}(3, 0){$\CC ob_{\f}(N^n)$}
%\diredge {A}{B}[\graphlinecolour{0}]
%\diredge {C}{B}[\graphlinecolour{0}]
%\diredge {A}{C}[\graphlinecolour{0}]
%\freetext (1.9,2.8){$\ga_n^N$}% \circ \psi_n \circ \la_n$}
%\freetext (0.5, 1.1){$\phi_n^N$}
%\freetext (6.2,1.1){$\theta_n^N \circ \csi_{{\mathrm {indef}}, n}^N$}
%\end{graph}
%\end{center}
\end{proof}

\section{Special cases}\label{specesetek}

Now, we study in low dimensions the homomorphism $\phi_n \co \CC ob_{\s}(\R^n) \to \CC ob_{\f}(\R^n)$
which maps a simple fold cobordism class into its fold cobordism class.

\subsection{$n = 1$}

By \cite{IS} the cobordism group
 $\CC ob_{\f}(\R)$ is isomorphic to $\Z$. 
 By Theorem~\ref{fotetel} the cobordism group $\CC ob_{\s}(\R)$ is 
 isomorphic to $\Z$ (see also \cite{Sa6}) and it is easy to see that 
 the homomorphism $\phi_1 \co \CC ob_{\s}(\R) \to \CC ob_{\f}(\R)$
is surjective hence it is an isomorphism.

\subsection{$n = 2$}\label{ketto}

By \cite{Kal2} the cobordism group
$\CC ob_{\f}(\R^2)$ is isomorphic to $\Z_2 \oplus \Z_2$.
Here, we give another proof to this isomorphism together with some geometric invariants.

Let $\iota_2 \co \CC ob_{\f}(\R^2) \to \imm(1,1)$ and
$\de_2 \co \CC ob_{\f}(\R^2) \to \imm(1,1)$ denote the 
homomorphisms which map a fold cobordism class
into the immersion of its indefinite and
definite fold singular set respectively.

Note that the cobordism group $\imm(1,1)$ is isomorphic to $\Z_2$, and
by Theorem~\ref{fotetel} the cobordism group
$\CC ob_{\s}(\R^2)$ is isomorphic to $\Z_2 \oplus \Z_2$.

\begin{lem}\label{elsolem}
The homomorphism $(\iota_2 \oplus \de_2) \circ \phi_2
\co \CC ob_{\s}(\R^2) \to \Z_2 \oplus \Z_2$ is an isomorphism.
\end{lem}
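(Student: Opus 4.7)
The plan is to exploit that by Theorem~\ref{fotetel} and Corollary~\ref{stablekov}(1),
\[
\CC ob_{\s}(\R^2) \cong \{S^2, S^1\} \oplus \{S^2, S\RP^{\infty}\} \cong \Z_2 \oplus \Z_2,
\]
while the target $\imm(1,1) \oplus \imm(1,1)$ is also $\Z_2 \oplus \Z_2$; since both groups are finite of the same order, it suffices to evaluate $(\iota_2 \oplus \de_2) \circ \phi_2$ on one generator of each summand and verify that the two images together span the target.

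For explicit generators I would apply the inverse $\psi(\R^2)$ of $\mu_2 = {\mathcal I}_{\s}(\R^2)$ to natural generators of the two summands of $\imm^{\ep^1_{\RP^{\infty}}}(1,1)$: take $g_1 \co Q_1^3 \to \R^2$ to be the model simple fold map of Proposition~\ref{puncrepr} built over an embedded circle $S_1 \subset \R^2$ carrying the trivial principal $\Z_2$-bundle (representing the summand $\{S^2, S^1\}$), and $g_2 \co Q_2^3 \to \R^2$ to be the same construction over an embedded circle $S_1 = S^1 \subset \R^2$ carrying the non-trivial principal $\Z_2$-bundle classified by $S^1 = \RP^1 \hookrightarrow \RP^{\infty}$ (representing $\{S^2, S\RP^{\infty}\}$).

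I would then compute the images. For $g_1$ the Morse-function family is untwisted, so each of the three definite critical points of $h \co S^2 \to \R$ sweeps out a parallel copy of $S_1 \subset \R^2$, and the unique indefinite critical point sweeps out one such copy; reducing modulo $2$ in $\imm(1,1) \cong \Z_2$ this yields $(\iota_2 \oplus \de_2) \circ \phi_2([g_1]) = (1, 1)$. For $g_2$ the non-trivial element of the structure group $\Z_2$ acts on the $S^2$-fibre as the $180^{\circ}$ rotation preserving $h$; such an involution has exactly two fixed points on $S^2$, and both must be critical points of $h$, so it fixes the unique indefinite critical point together with exactly one of the three definite critical points (the one lying on the rotation axis) and swaps the remaining two. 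Consequently the indefinite fold singular set of $g_2$ is again a single immersed circle, while the definite fold singular set splits as the fixed-point circle together with the connected double cover of $S_1$ arising from the swapped pair—two circles in all—giving $(\iota_2 \oplus \de_2) \circ \phi_2([g_2]) = (1, 0)$.

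Since $(1,1)$ and $(1,0)$ generate $\Z_2 \oplus \Z_2$, the composition is surjective, and as source and target are finite of the same order it is an isomorphism. The main subtlety is the fixed-point analysis of the $\Z_2$-action on the four critical points of $h$ in the construction of $g_2$; once the combinatorial fact ``one fixed, two swapped'' among the definite critical points is established (which follows from the structural description of the Morse family in Proposition~\ref{puncrepr} together with the fact that any non-trivial smooth involution on $S^2$ preserving $h$ is a $180^{\circ}$ rotation about an axis meeting $S^2$ in two critical points of $h$), the rest is direct bookkeeping via Proposition~\ref{puncrepr} and the Pontryagin--Thom identifications underlying Theorem~\ref{fotetel}.
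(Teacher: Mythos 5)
Your overall strategy mirrors the paper's terse argument: observe that $\CC ob_{\s}(\R^2)$ and $\imm(1,1)\oplus\imm(1,1)$ are both $\Z_2\oplus\Z_2$, so it suffices to prove surjectivity, and verify this by constructing explicit simple fold maps via Proposition~\ref{puncrepr}. The paper leaves the bookkeeping to the reader; you attempt to carry it out, but the computation contains genuine errors.

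The central mistake is the identification of the invariant detecting $\imm(1,1)\cong\Z_2$. For an immersed closed curve in $\R^2$, the cobordism class in $\imm(1,1)=\{S^2,T\ga^1\}$ is given by the number of double points reduced mod~$2$, \emph{not} by the number of connected components mod~$2$ (a figure-eight and a figure-eight disjoint from an embedded circle are cobordant, yet have different component counts). In particular any embedded multicurve represents $0\in\imm(1,1)$. Two consequences: first, your $g_1$, built over an \emph{embedded} circle with trivial $\Z_2$-bundle, has trivial framed normal data and trivial $\Z_2$-structure, so it represents $0\in\CC ob_{\s}(\R^2)\cong\imm^{\ep^1_{\RP^\infty}}(1,1)$ rather than a generator of the summand $\pi_1^s$; to represent that generator one must take $g|_{S_1(g)}$ to be a nontrivially framed immersion, e.g.\ a figure-eight, and then $\iota_2$ and $\de_2$ (nine double points among the three parallel definite curves) both give $1$. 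Second, for $g_2$ the indefinite set is an embedded circle, so $\iota_2([g_2])=0$, not $1$; meanwhile the connected double cover among the definite circles maps to $\R^2$ as a curve wrapping twice around the tubular neighbourhood of $g_2(S_1)$, and after perturbing to a generic fold map this curve acquires an \emph{odd} number of double points, so $\de_2([g_2])=1$. The correct images are thus $(1,1)$ for the figure-eight generator and $(0,1)$ for $g_2$, which do generate $\Z_2\oplus\Z_2$, so the lemma does hold; but with the component-count bookkeeping as written, your own $g_1$ and $g_2$ would in fact both yield $\iota_2=0$, and the argument would not establish surjectivity.

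Your fixed-point analysis of the $\Z_2$-action on the Morse function $h$ (one fixed and two swapped definite critical points) is correct and is the right starting point; the gap is entirely in translating the resulting immersed multicurves into classes in $\imm(1,1)$.
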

\begin{proof}
It is enough to show that $(\iota_2 \oplus \de_2) \circ \phi_2$
is surjective. This is true because it is easy to construct 
simple fold maps of oriented 3-manifolds into the plane 
whose indefinite and definite fold singular sets are immersed 
into $\R^2$ in a prescribed way (see Proposition~\ref{puncrepr}).
\end{proof}

\begin{lem}\label{masodiklem}
The homomorphism $\phi_2 \co \CC ob_{\s}(\R^2) \to \CC ob_{\f}(\R^2)$ is surjective.
\end{lem}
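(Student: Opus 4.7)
My plan is to combine the preceding Lemma~\ref{elsolem} with the known computation of $\CC ob_{\f}(\R^2)$ from \cite{Kal2} and conclude by a cardinality/factoring argument. Explicitly, first I would record that the natural pair of invariants factors as
\[
\CC ob_{\s}(\R^2) \xrightarrow{\phi_2} \CC ob_{\f}(\R^2) \xrightarrow{\iota_2 \oplus \de_2} \imm(1,1) \oplus \imm(1,1) \cong \Z_2 \oplus \Z_2 .
\]
By Lemma~\ref{elsolem} the composite $(\iota_2 \oplus \de_2) \circ \phi_2$ is an isomorphism, so in particular it is surjective; hence the second arrow $\iota_2 \oplus \de_2 \co \CC ob_{\f}(\R^2) \to \Z_2 \oplus \Z_2$ is itself surjective.

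Next I would invoke the main result of \cite{Kal2}, which states that $\CC ob_{\f}(\R^2) \cong \Z_2 \oplus \Z_2$. Since a surjective group homomorphism between two finite groups of the same order is an isomorphism, it follows that $\iota_2 \oplus \de_2$ is in fact an isomorphism $\CC ob_{\f}(\R^2) \xrightarrow{\cong} \Z_2 \oplus \Z_2$.

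Finally, from the factorization above together with the fact that both $(\iota_2 \oplus \de_2) \circ \phi_2$ and $\iota_2 \oplus \de_2$ are isomorphisms, I conclude that $\phi_2$ itself is an isomorphism, and in particular surjective, which proves the lemma. The argument is essentially a diagram chase, and there is no real obstacle here provided the previous results are at hand; the only subtle point worth flagging is that we are using the cardinality of $\CC ob_{\f}(\R^2)$ from \cite{Kal2} as an external input (so this proof does not yet yield an independent verification of Theorem~\ref{3to2ujbiz}, but rather derives surjectivity of $\phi_2$ from it).
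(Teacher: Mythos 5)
Your argument is logically correct, but it takes a genuinely different route from the paper, and the difference matters in context. The paper's proof is geometric and independent of the computation in \cite{Kal2}: given a fold map with non-simple singular fibers, one uses the classification of singular fibers of stable maps from $4$-manifolds to $3$-manifolds (Levine, Saeki) to build explicit fold cobordisms. Concretely, restricting a representative ${\si}_{\SS}$ of a higher-codimensional singular fiber $\SS$ (such as $\iii^4$ or $\iii^6$) to the boundary of a small cube, and applying \cite[Proposition~3.4]{Kal2}, produces a null-cobordant fold map whose non-simple portion exhibits a cobordism trading away $\ii^3$-type and then $\ii^2$-type fibers; iterating shows every oriented fold map $M^3 \to \R^2$ is cobordant to a simple one. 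Your proof, by contrast, is a short cardinality/diagram-chase argument: Lemma~\ref{elsolem} makes $\iota_2 \oplus \de_2 \co \CC ob_{\f}(\R^2) \to \Z_2 \oplus \Z_2$ surjective, and then importing $\CC ob_{\f}(\R^2) \cong \Z_2 \oplus \Z_2$ from \cite{Kal2} forces it to be an isomorphism, whence $\phi_2$ is one. Both are valid; what the paper's approach buys you, and yours does not, is exactly what the section announces it wants: a proof of Theorem~\ref{3to2ujbiz} (i.e., of the \cite{Kal2} computation) that does not presuppose it. You correctly flag this circularity yourself. So, as a standalone derivation of the lemma your argument is fine, but if one adopts it, the paper's claim to be giving ``another proof'' of the $\Z_2 \oplus \Z_2$ computation evaporates, since Lemma~\ref{masodiklem} is one of the two inputs to that theorem's proof.
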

\begin{proof}
By the classification of singular fibers of stable fold maps into $3$-manifolds \cite{Lev, Sa}
it is easy to show the surjectivity as follows.

Let ${\si}_{\SS} \co {s}_{\SS} \to (-\ep, \ep)^3$ be a representative of a singular fiber $\SS$.
Then the fold map 
${\si}_{\SS}|_{{\si}_{\SS}^{-1}(\del [-\ep/2,\ep/2]^3)} \co {\si}_{\SS}^{-1}(\del [-\ep/2,\ep/2]^3) \to \del [-\ep/2,\ep/2]^3$ (after applying 
\cite[Proposition~3.4]{Kal2})
represents a null-cobordant element in the cobordism group $\CC ob_{\f}(\R^2)$.

In this way, we obtain that the singular fiber ${\iii^4}$ shows a cobordism between a fold map with 
non-simple singular fibers of type $\ii^2$ and a simple fold map, and
the singular fiber ${\iii^6}$ shows a cobordism between a fold map with
non-simple singular fibers of types $\ii^2$, $\ii^3$ and a fold map
with only
non-simple singular fibers of type $\ii^2$ (for the notations $\ii^2$, $\ii^3$, $\iii^4$ and $\iii^6$, see \cite{Sa}),
Hence a fold map which has non-simple singular fibers 
is cobordant to a simple fold map, details are left to the reader.
\end{proof}

By the above two lemmas, we can give a proof different from that given in \cite{Kal2} to the
computation of the cobordism group $\CC ob_{\f}(\R^2)$.

\begin{proof}[Proof of Theorem~\ref{3to2ujbiz}]
By Lemma~\ref{elsolem} $\phi_2$ is injective and so by
Lemma~\ref{masodiklem} it is an isomorphism.
Hence the homomorphism 
\[
\iota_2 \oplus \de_2 \co \CC ob_{\f}(\R^2) \to \imm(1,1) \oplus \imm(1,1)
\] 
which maps a fold cobordism class $[f]$ into the direct sum $[f |_{S_1(f)}] \oplus [f |_{S_0(f)}]$
is also an isomorphism.
\end{proof}

\subsection{$n = 3$}

%In the case $n=3$ we have a homomorphism
%$\csi \co \CC ob_{\f}(4,-1) \to \pi_3^s(T$det$\ga^1 \x \ga^1) \oplus \pi_3^s(S^1 \vee S\CP^{\infty})$
%from which we can obtain a homomorphism 
%$\csi_3 \co \CC ob_{\f}(4,-1) \to Imm(2,1) \oplus Imm^{\ga^1 \x \ga^1}(1,2) \oplus \pi_2^s \oplus \pi_2^s(\CP^{\infty})$ 
%by mapping the group $Imm^{det\ga^1 \x \ga^1}(2,1)$ into 
%$Imm(2,1) \oplus Imm^{\ga^1 \x \ga^1}(1,2)$  in a similar way to 
%Section~\ref{geomkod}.
%Finally we have a homomorphism
%$\csi_3 \co \CC ob_{\f}(4,-1) \to \Z_8 \oplus \Z_4 \oplus \Z_2 \oplus \Z$.

By Theorem~\ref{injdontes},
we have to study the natural homomorphism
\[
\ga_{3,2} \co \imm^{\ep^1 \x \ga^1}(1,2) \to \imm^{\ga^1 \x \ga^1}(1,2).
\]

If $\ga_{3,2}$ is injective, then the homomorphism 
$\phi_3 \co \CC ob_{\s}(\R^3) \to \CC ob_{\f}(\R^3)$ is also injective.

The group $\imm^{\ep^1 \x \ga^1}(1,2)$ is isomorphic to $\Z_2$.
Let $r \co \imm^{\ga^1 \x \ga^1}(1,2) \to \imm(2,1)$ be a homomorphism such that
for an element $[s] \in \imm^{\ga^1 \x \ga^1}(1,2)$ an immersed surface representing
$r([s])$ is obtained by putting a ``figure eight'' in each fiber of the $2$-dimensional normal bundle of
the representative $s$ in $\R^3$, which is invariant under the structure group of the bundle $\ga^1 \x \ga^1$.

Since the composition $r \circ \ga_{3,2}$ is injective,
as one can check easily, by Theorem~\ref{injdontes}, we get that 
the homomorphism $\phi_3$ is injective. 

%Since the simple fold cobordism group $\CC ob_S(4,-1)$ is equal to 
%$\imm^{\ep^1}(2,1) \oplus \imm^{\ep^1 \x \ga^1}(1,2) = \Z_2 \oplus \Z_2$
%and since the homomorphism 
%\[
%\ga_3 \co \imm^{\ep^1}(2,1) \oplus \imm^{\ep^1 \x \ga^1}(1,2) \to \imm(2,1) \oplus \imm^{\ga^1 \x \ga^1}(1,2)
%\]
%is injective, as one can check easily, by Theorem~\ref{injdontes} we get that 
%the homomorphism $\phi_3$ is injective. 

By Theorem~\ref{injdontes} and the existence of a fold map from a not null-cobordant
oriented 4-manifold \cite{Sa}, we have that the homomorphism $\phi_3$ is not surjective.

%\subsection{$n = 4$}
%
%By Theorem~\ref{simpleinjfold} 
%we have to study the homomorphism
%\[
%\ga_{4,2} \co \imm^{\ep^1 \x \ga^1}(2,2) \to \imm^{\ga^1 \x \ga^1}(2,2)
%\]
%
%The group $\imm^{\ep^1 \x \ga^1}(2,2)$ is isomorphic to $\Z_8$.
%...

\subsection{$n = 5$}

In this case the homomorphism $\ga_{5,2}$ is also injective
because of the following. By \cite{Hu}
the group $\imm^{\ep^1 \x \ga^1}(3,2) \cong \imm^{\ga^1}(3,1) \cong \Z_2$ is mapped
by the forgetting homomorphism $\imm^{\ep^1 \x \ga^1}(3,2) \to \imm^{}(3,2)$
into the group $\imm^{}(3,2) \cong \Z_2$ injectively,
hence the first forgetting homomorphism in the composition 
$\imm^{\ep^1 \x \ga^1}(3,2) \to \imm^{\ga^1 \x \ga^1}(3,2) \to \imm^{}(3,2)$
is injective.
Therefore the homomorphisms $$\ga_{5,2} \co \imm^{\ep^1 \x \ga^1}(3,2) \to \imm^{\ga^1 \x \ga^1}(3,2)$$
and $\phi_5$ are injective.

\subsection{$n = 6$}

The cobordism group $\CC ob_{\s}(\R^6)$ vanishes,
hence $\phi_6$
is clearly injective.

%\subsection{$n = 7$}
%
%By Theorem~\ref{injdontes} 
%we have to study the homomorphism
%\[
%\ga_{7,2} \co \imm^{\ep^1 \x \ga^1}(5,2) \to \imm^{\ga^1 \x \ga^1}(5,2)
%\]
% 
%By \cite{Liu} 
%the group $\imm^{\ep^1 \x \ga^1}(5,2)$ is ismomorphic to $\Z_2$.

%\subsection{Problem}
%\begin{rem}

%Ando solved \cite{An} that
%the oriented cobordism group $\CC ob_{\f}(n, 0)$ of $0$-codimensional fold maps is isomorphic to
%the stable homotopy group $\pi_n^s$.
%
%By this result, ..., Theorem~\ref{fotetel} and Theorem~\ref{simpleinjfold} 
%we have a homomorphism $\CC ob_{\f}(n, 0) \to \CC ob_{\f}(n+2, -1)$

%we have that there exists a commutative diagram
%\begin{center}
%\begin{graph}(6,2)
%\graphlinecolour{1}\grapharrowtype{2}
%\textnode {A}(0,1.5){$\CC ob_{\f}(n, 0)$}
%\textnode {B}(6, 1.5){$\CC ob_{\f}(n, 0)$}
%\textnode {C}(3, 0){$\CC ob_{\f}(n+2, -1)$}
%\diredge {A}{B}[\graphlinecolour{0}]
%\diredge {C}{B}[\graphlinecolour{0}]
%\diredge {A}{C}[\graphlinecolour{0}]
%\freetext (3,1.8){id}
%\end{graph}
%\end{center}
%The author does not know what could be a geometrical proof to this relation
%between different codimensional fold maps.
%\end{rem}

\section{Bordisms of fold maps}\label{bord}

\begin{defn}[Bordism]\label{borddef}
Two fold maps (resp. simple fold maps) $f_i \co Q_i^{n+1} \to N_i^n$   
of closed oriented ${(n+1)}$-dimensional manifolds $Q_i^{n+1}$ 
into closed oriented $n$-dimensional manifolds $N_i^n$ $(i=0,1)$ are  
{bordant} (resp. {\it simple bordant}) if 
\begin{enumerate}
\item
there exists a fold map (resp. simple fold map)
$F \co X^{n+2} \to Y^{n+1}$ 
from a compact oriented $(n+2)$-dimensional 
manifold $X^{n+2}$ to a compact oriented $(n+1)$-dimensional 
manifold $Y^{n+1}$,
\item
$\del X^{n+2} = Q_0^{n+1} \amalg (-Q_1)^{n+1}$, $\del Y^{n+2} = N_0^{n+1} \amalg (-N_1)^{n+1}$ and
\item
${F |}_{Q_0^{n+1} \x [0,\ep)}=f_0 \x
id_{[0,\ep)}$ and ${F |}_{Q_1^{n+1} \x (1-\ep,1]}=f_1 \x id_{(1-\ep,1]}$, where $Q_0^{n+1} \x [0,\ep)$
 and $Q_1^{n+1} \x (1-\ep,1]$ are small collar neighbourhoods of $\del X^{n+2}$ with the
identifications $Q_0^{n+1} = Q_0^{n+1} \x \{0\}$, $Q_1^{n+1} = Q_1^{n+1} \x \{1\}$. 
\end{enumerate}

We call  the map $F$ a {bordism} between $f_0$ and $f_1$.
\end{defn}

We can define a commutative group operation on the set of bordism classes
by $[f \co  Q_1^{n+1} \to N_1^n] + [g \co Q_2^{n+1} \to N_2^n] = f \amalg g \co  Q_1^{n+1} \amalg Q_2^{n+1} \to N_1^n \amalg N_2^n$
in the usual way.

%By changing the target manifold $\R^n$ to an arbitrary closed oriented $n$-dimensional manifold $N^n$ in the previous chapters 
We obtain analogous theorems about the bordism group of simple fold maps denoted by
$Bor_{\s}(n)$ and the natural homomorphism $\phi_n^{Bor} \co Bor_{\s}(n) \to Bor_{\f}(n)$ into
the bordism group of fold maps denoted by $Bor_{\f}(n)$.
($\phi_n^{Bor}$ maps a simple fold bordism class into its fold bordism class.)

Let $B\imm^{\csi^k}(n-k,k)$ denote the usual bordism group
of $k$-codimensional immersions into closed oriented $n$-dimensional manifolds, whose normal bundles
can be induced from the vector bundle $\csi^k$. Note that this group
$B\imm^{\csi^k}(n-k,k)$ is isomorphic to the $n$th oriented bordism group 
$\Omega_{n}(\Gamma_{\csi^k})$
of the classifying space $\Gamma_{\csi^k}$ for such immersions \cite{Sch, Sz1}.

\begin{thm}\label{borizom}
The bordism group $Bor_{\s}(n)$ of simple fold maps is isomorphic to
the group $B\imm^{\ep^1}(n-1,1) \oplus B\imm^{\ep^1 \x \ga^1}(n-2,2)$.\qed
\end{thm}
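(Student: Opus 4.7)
The plan is to imitate the proof of Theorem~\ref{fotetel}, with the fixed target $N^n$ replaced by varying closed oriented $n$-manifolds, and then to split the resulting immersion group with the same wedge decomposition of the Thom space that yields Corollary~\ref{stablekov}(1).

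First I would prove a bordism-version of Proposition~\ref{puncrepr}: every class in $Bor_{\s}(n)$ has a representative $g \co Q^{n+1} \to N^n$ of the composition shape
\[
EG \x_G S^2 \xrightarrow{\varrho} EG \x \R \xrightarrow{i} N^n,
\]
and likewise every simple fold bordism can be presented in this form with the analogous compatibility near the boundary. The argument given in the cobordism case only uses \cite[Proposition~3.4]{Kal2} together with the local analysis of the ``figure eight'' singular fiber; neither step depends on fixing the target manifold, so it carries through once we allow the target ``family'' $EG \x \R$ to sit inside a varying oriented $(n+1)$-manifold $Y^{n+1}$ via the same tubular embedding $i$.

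Next, following the construction of the homomorphism~\eqref{muhomom}, I would define
\[
\mu_n^{Bor} \co Bor_{\s}(n) \longrightarrow B\imm^{\ep^1_{\RP^{\infty}}}(n-1,1)
\]
by sending $[g]$ to the bordism class of $g|_{S_1(g)}$ equipped with the classifying map $S_1(g) \to \RP^{\infty}$ of the principal $\Z_2$-bundle arising from the $\Z_2$-symmetry of $\varrho$. Surjectivity is obtained by reversing the construction: from an immersion $M^{n-1} \to N^n$ with normal bundle induced from $\ep^1_{\RP^{\infty}}$ one builds the associated $S^2$-bundle family of Morse functions and then composes with a tubular embedding. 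Injectivity follows by applying the same procedure one dimension higher to a bordism of the marked immersions, so $\mu_n^{Bor}$ is an isomorphism.

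It remains to split $B\imm^{\ep^1_{\RP^{\infty}}}(n-1,1)$. Under the identification $B\imm^{\csi^k}(n-k,k) \cong \Omega_n(\Gamma_{\csi^k})$ of \cite{Sch, Sz1}, the stable wedge decomposition
\[
T\ep^1_{\RP^{\infty}} = \Sigma(\RP^{\infty}_+) \simeq S^1 \vee S\RP^{\infty} = T\ep^1 \vee T(\ep^1 \oplus \ga^1)
\]
(already used implicitly for Corollary~\ref{stablekov}(1)) together with the wedge-additivity of the oriented bordism homology functor yields
\[
B\imm^{\ep^1_{\RP^{\infty}}}(n-1,1) \cong B\imm^{\ep^1}(n-1,1) \oplus B\imm^{\ep^1 \x \ga^1}(n-2,2).
\]
Geometrically this realises the bordism version of $\theta_{\s}^N$ from diagram~\eqref{invaridiag}: the first summand forgets the ``marking'' line bundle $\ka^1$, while the second is obtained from the Poincar\'e dual to $w_1(\ka^1)$ pushed forward by the immersion. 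The main obstacle I foresee is just bookkeeping, namely verifying the bordism-version of Proposition~\ref{puncrepr} inside the setting of Definition~\ref{borddef}, where the target itself varies as a bordism $Y^{n+1}$; however, since the Morse-fibre family depends only on the intrinsic normal structure of the indefinite fold locus, this verification is routine.
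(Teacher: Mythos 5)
Your reduction of $Bor_{\s}(n)$ to $B\imm^{\ep^1_{\RP^{\infty}}}(n-1,1)$ is the right first step and follows the same route the paper (implicitly) takes: the bordism version of Proposition~\ref{puncrepr} carries over because, as you note, the $S^2$-bundle structure is determined by the local geometry of the ``figure eight'' fiber and is insensitive to whether the target is fixed or is itself varying through a bordism $Y^{n+1}$; the construction of $\mu_n^{Bor}$ and the surjectivity/injectivity arguments then mirror the cobordism case verbatim.

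The gap is in the final splitting step. You invoke ``wedge-additivity of the oriented bordism homology functor'' applied to $T\ep^1_{\RP^\infty}\simeq S^1\vee S\RP^{\infty}$, but $B\imm^{\csi^k}(n-k,k)$ is $\Omega_n(\Gamma_{\csi^k})$, not $\Omega_n(T\csi^k)$, and the classifying space $\Gamma_{\csi^k}=\Omega^{\infty}\Sigma^{\infty}T\csi^k$ converts wedges into products: $\Gamma_{\ep^1_{\RP^{\infty}}}\simeq \Gamma_{\ep^1}\times\Gamma_{\ep^1\times\ga^1}$. Wedge-additivity of $\Omega_*$ is therefore not the relevant tool. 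What you actually need is a splitting of $\Omega_n(\Gamma_{\ep^1}\times\Gamma_{\ep^1\times\ga^1})$, and $\Omega_n$ of a product is \emph{not} the direct sum of the $\Omega_n$'s: one has $\Omega_n(X\times Y)\cong\Omega_n(\mathrm{pt})\oplus\tilde\Omega_n(X)\oplus\tilde\Omega_n(Y)\oplus\tilde\Omega_n(X\wedge Y)$, whereas $\Omega_n(X)\oplus\Omega_n(Y)\cong 2\,\Omega_n(\mathrm{pt})\oplus\tilde\Omega_n(X)\oplus\tilde\Omega_n(Y)$. So the passage from $\Omega_n(\Gamma_{\ep^1_{\RP^{\infty}}})$ to $B\imm^{\ep^1}(n-1,1)\oplus B\imm^{\ep^1\times\ga^1}(n-2,2)$ requires either an argument that the smash term $\tilde\Omega_n(\Gamma_{\ep^1}\wedge\Gamma_{\ep^1\times\ga^1})$ exactly replaces the extra copy of $\Omega_n(\mathrm{pt})$, or a direct geometric construction (a bordism-level $\theta_{\s}^{Bor}$ together with an explicit inverse) that never detours through the Thom-space wedge decomposition. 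This is precisely the place where the cobordism case is genuinely easier: there, $\imm^{\csi^k}_N(n-k,k)\cong\{\dot N,T\csi^k\}$ and stable maps \emph{into} a wedge do split, so the analogy breaks exactly at this step. You should either carry out the geometric bordism-level splitting or supply the missing stable-homotopy computation; as written, the claimed isomorphism does not follow.
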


In other words the bordism group $Bor_{\s}(n)$ is
isomorphic to the direct sum $\Omega_{n}(\Gamma_{\ep^1}) \oplus \Omega_{n}(\Gamma_{\ep^1 \x \ga^1})$.

Let 
$
\ga_{n,2}^{Bor} \co B\imm^{\ep^1 \x \ga^1}(n-2,2) \to B\imm^{\ga^1 \x \ga^1}(n-2,2)
$
denote the natural forgetting homomorphism, and 
let $\pi_{n,2}^{Bor} \co Bor_{\s}(n+1,-1) \to B\imm^{\ep^1 \x \ga^1}(n-2,2)$
be the isomorphism of Theorem~\ref{borizom} composed with the projection to the second factor.

\begin{thm}
If two simple fold bordism classes $[f]$ and $[g]$ 
are mapped into distinct elements by the natural homomorphism
$\ga_{n,2}^{Bor} \circ \pi_{n,2}^{Bor}$,
%\co B\imm^{\ep^1}(n-1,1) \oplus B\imm^{\ep^1 \x \ga^1}(n-2,2) \to
%B\imm^{}(n-1,1) \oplus B\imm^{\ga^1 \x \ga^1}(n-2,2)
then $[f]$ and $[g]$ are not fold bordant.
If $\ga_{n,2}^{Bor}$ is injective, then so is $\phi_n^{Bor}$.

If there exists a fold map from a not null-cobordant 
$(n+1)$-dimensional manifold into a closed oriented $n$-dimensional manifold, then
$\phi_n^{Bor}$ is not surjective.\qed
\end{thm}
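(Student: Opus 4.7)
The plan is to mirror the proof of Theorem~\ref{injdontes} in the bordism category. First I would set up the bordism analog of the commutative diagram~\eqref{invaridiag}: replacing each immersion cobordism group $\imm^{\csi^k}_N(n-k,k)$ by the immersion bordism group $B\imm^{\csi^k}(n-k,k)$ produces invariants ${\mathcal I}_{\s}^{Bor}$, ${\mathcal I}_{\f}^{Bor}$ on $Bor_{\s}(n+1,-1)$ and $Bor_{\f}(n+1,-1)$, while the forgetting homomorphisms $\theta_{\s}^{Bor}$, $\theta_{\f}^{Bor}$, $\gamma_n^{Bor}$ are defined by the same Poincar\'e-dual-of-Stiefel--Whitney-class construction as $\theta_{\s}^N$, $\theta_{\f}^N$, $\ga_n^N$ in Section~\ref{geomkod}. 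By Theorem~\ref{borizom} the composition $\theta_{\s}^{Bor} \circ {\mathcal I}_{\s}^{Bor}$ is an isomorphism onto $B\imm^{\ep^1}(n-1,1) \oplus B\imm^{\ep^1 \x \ga^1}(n-2,2)$, and by construction $\ga_{n,2}^{Bor} \circ \pi_{n,2}^{Bor}$ is exactly the second coordinate of $\gamma_n^{Bor} \circ \theta_{\s}^{Bor} \circ {\mathcal I}_{\s}^{Bor}$.

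For the first assertion, if $\phi_n^{Bor}([f]) = \phi_n^{Bor}([g])$ then commutativity of the bordism analog of diagram~\eqref{invaridiag} forces $\gamma_n^{Bor}(\theta_{\s}^{Bor}({\mathcal I}_{\s}^{Bor}([f]))) = \gamma_n^{Bor}(\theta_{\s}^{Bor}({\mathcal I}_{\s}^{Bor}([g])))$, and projection to the second summand yields $(\ga_{n,2}^{Bor} \circ \pi_{n,2}^{Bor})([f]) = (\ga_{n,2}^{Bor} \circ \pi_{n,2}^{Bor})([g])$; the contrapositive is the stated claim. For the second assertion, I would adapt the splitting $\psi(N^n)$ from Section~\ref{invari} to bordism, producing $\psi^{Bor}$ such that the $B\imm^{\ep^1}(n-1,1)$ summand of $Bor_{\s}(n+1,-1)$ injects via $\phi_n^{Bor}$ into $Bor_{\f}(n+1,-1)$; exactly as in the cobordism case, this uses the representative of Proposition~\ref{puncrepr} together with the observation that the three parallel immersions of the definite fold singular set sum to the inverse of the indefinite one, so that ${\mathcal D}_{\f}^{Bor}$ detects this summand. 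Combined with injectivity of $\phi_n^{Bor}$ on the second summand, which follows from injectivity of $\ga_{n,2}^{Bor}$ together with the diagram and Theorem~\ref{borizom}, one obtains injectivity of $\phi_n^{Bor}$.

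The third assertion follows from Corollary~\ref{nullkob}: any closed $(n+1)$-dimensional source manifold of an oriented simple fold map is oriented null-cobordant, and a fold bordism $F \co X^{n+2} \to Y^{n+1}$ with $\del X^{n+2} = Q_0^{n+1} \amalg (-Q_1^{n+1})$ provides an oriented bordism between the two source manifolds. Hence any fold bordism class in the image of $\phi_n^{Bor}$ must be represented by a fold map whose source is oriented null-cobordant, contradicting the hypothesis. The main obstacle is verifying that the bordism version of $\psi$ functions as a genuine splitting, since bordism permits the target to vary along with the source; but the construction via a family of Morse functions on $S^2$ parametrized by the indefinite singular set, composed with an immersion onto a tubular neighbourhood of the image of this singular set, carries through verbatim once one works with $\Omega_n(\Gamma_{\csi^k})$ in place of the stable cohomotopy sets.
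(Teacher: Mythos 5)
Your proposal is correct and reproduces precisely the argument the paper intends: the theorem is stated with a $\qed$ because it is the direct bordism analog of Theorem~\ref{injdontes}, obtained by replacing the groups $\imm^{\csi^k}_N(n-k,k)$ with $B\imm^{\csi^k}(n-k,k)$ in diagram~\eqref{invaridiag}, using Theorem~\ref{borizom} in place of the isomorphism $\theta_{\s}^N \circ {\mathcal I}_{\s}(N^n)$, the adapted splitting $\psi^{Bor}$ for the $B\imm^{\ep^1}(n-1,1)$ summand, and Corollary~\ref{nullkob} together with the fact that a bordism of fold maps restricts to an oriented cobordism of source manifolds for non-surjectivity. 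One small point worth stating more carefully in the injectivity argument: from $\phi_n^{Bor}(x)=0$ one first deduces via the diagram and injectivity of $\ga_{n,2}^{Bor}$ that the $B\imm^{\ep^1 \x \ga^1}(n-2,2)$-component of $x$ vanishes, and only then applies the splitting on the remaining $B\imm^{\ep^1}(n-1,1)$ summand; phrased as ``injectivity on each summand separately'' it could be misread, but your description of the mechanism is correct.
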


%The first part of this theorem is easier to apply than its analogue about cobordism because of the 
%corollary \cite{CoFl} of a theorem of Rohlin \cite{Ro} which states that 
%for any space $X$ the sequence
%\[
%\begin{CD}
%\Omega_n(X) @> t >> \Omega_n(X) @> p >> {\mathfrak N}_n(X)
%\end{CD}
%\]
%is exact where the homomorphism $t$ is defined by $t([f]) = [f] + [f]$ and
%the homomorphism $p$ is the natural forgetting map.
%
%
%Therefore the sequences
%\[
%\begin{CD}
%B\imm^{\ep^1}(n-1,1) @> t >> B\imm^{\ep^1}(n-1,1) @> \ga_{n,1}^{Bor} >> B\imm^{}(n-1,1)
%\end{CD}
%\]
%and 
%\[
%\begin{CD}
%B\imm^{\ep^1 \x \ga^1}(n-2,2) @> t >> B\imm^{\ep^1 \x \ga^1}(n-2,2) @> \ga_{n,2}^{Bor} >> B\imm^{\ga^1 \x \ga^1}(n-2,2)
%\end{CD}
%\]
%are exact where in each case the homomorphism $t$ is defined by $t([f]) = [f] + [f]$ and
%the other homomorphisms are the natural forgetting maps.
%
%By the above statements, we have 
%
%\begin{thm}
%The elements of the simple bordism group $Bor_s(n+1,-1)$ which are not divisible by $2$ 
%are not zero in the fold bordism group $Bor_{\f}(n+1,-1)$.
%Especially any $\Z_2^r$ summand of the simple bordism 
%group $Bor_s(n+1,-1)$ is a subgroup of the fold bordism group $Bor_{\f}(n+1,-1)$.
%\end{thm}

A theorem \cite{CoFl} that can be applied here is that
the rank of the bordism groups of any space $X$ can be computed by
$\Omega_*(X) \otimes \Q = H_*(X;\Q) \otimes \Omega_*$, i.e., 
$\Omega_n(X) \otimes \Q = \bigoplus_{p + q =n} H_p(X;\Q) \otimes \Omega_q$.

By \cite{Sz1} the group $\Omega_{n}(\Gamma_{\ep^1}) \otimes \Q$ is isomorphic to
$\bigoplus_{i=0}^n \Omega_i \otimes \Q$. By \cite{We} the cobordism group 
$\imm^{\ep^1 \x \ga^1}(n-2,2) \cong
\pi_n(\Gamma_{\ep^1 \x \ga^1})$ is $2$-primary and finite, hence by standard arguments 
the reduced 
cohomology ring
$\Bar H_*(\Gamma_{\ep^1 \x \ga^1};\Z)$ is also $2$-primary and finite.
Therefore the group $\Omega_{n}(\Gamma_{\ep^1 \x \ga^1}) \otimes \Q$
is isomorphic to $\Omega_n \otimes \Q$.
Hence, we have the following theorem.

\begin{thm}
The rank of the simple fold bordism group $Bor_{\s}(n)$ is equal to
$\rank{\Omega_n} + \sum_{q = 0}^{n}  \rank{\Omega_q}$.\qed
\end{thm}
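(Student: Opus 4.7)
The plan is to use Theorem~\ref{borizom} to split $Bor_{\s}(n)$ into the direct sum $\Omega_{n}(\Gamma_{\ep^1}) \oplus \Omega_{n}(\Gamma_{\ep^1 \x \ga^1})$ and then compute the rank of each summand separately by tensoring with $\Q$ and invoking the Conner--Floyd decomposition $\Omega_n(X) \otimes \Q \cong \bigoplus_{p+q=n} H_p(X;\Q) \otimes \Omega_q$.

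For the first summand I would cite the result of Sz\H ucs that $\Omega_{n}(\Gamma_{\ep^1}) \otimes \Q \cong \bigoplus_{i=0}^n \Omega_i \otimes \Q$. This immediately contributes $\sum_{q=0}^n \rank{\Omega_q}$ to the total rank. For the second summand the key observation, recorded just before the theorem, is that by \cite{We} the cobordism group $\imm^{\ep^1 \x \ga^1}(n-2,2) \cong \pi_n(\Gamma_{\ep^1 \x \ga^1})$ is $2$-primary and finite for every $n$. From this I would deduce, by the standard Hurewicz/Serre argument comparing stable homotopy with rational homology, that the reduced rational homology $\Bar H_*(\Gamma_{\ep^1 \x \ga^1};\Q)$ vanishes. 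Plugging this into the Conner--Floyd formula, only the $H_0$-term survives and we obtain $\Omega_{n}(\Gamma_{\ep^1 \x \ga^1}) \otimes \Q \cong \Omega_n \otimes \Q$, contributing $\rank{\Omega_n}$.

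Adding the two contributions yields the asserted identity
\[
\rank{Bor_{\s}(n)} \;=\; \rank{\Omega_n} \;+\; \sum_{q=0}^{n} \rank{\Omega_q}.
\]
The main obstacle, and the only nontrivial step, is the passage from the finiteness and $2$-primary nature of the stable homotopy groups $\pi_*(\Gamma_{\ep^1 \x \ga^1})$ to the vanishing of the positive-degree rational homology of the Thom space; this is where one must invoke a classical rational comparison result for stable homotopy versus homology, rather than manipulate the bundle data directly. Everything else is a routine assembly of Theorem~\ref{borizom}, the Sz\H ucs computation, and the Conner--Floyd splitting.
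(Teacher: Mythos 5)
Your proposal is correct and is essentially the paper's own argument: both split $Bor_{\s}(n)$ via Theorem~\ref{borizom}, apply the Conner--Floyd rationalization, quote Sz\H ucs for the first summand, and pass from the $2$-primary finiteness of $\pi_*(\Gamma_{\ep^1 \x \ga^1})$ (Wells) to vanishing of its positive-degree rational homology to conclude that the second summand contributes only $\rank{\Omega_n}$.
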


%Similarly to Theorem~\ref{simpleinjfold} we have
%
%\begin{thm}

We also have that 
the direct summand $B\imm^{\ep^1}(n-1,1)$ of the simple fold bordism group $Bor_{\s}(n)$
is mapped by $\phi_n^{Bor}$ isomorphically onto a direct summand $B\imm^{\ep^1}(n-1,1)$ of 
the fold bordism group $Bor_{\f}(n)$.
Hence, we obtain the following.

\begin{cor}
The homomorphism $\phi_n^{Bor} \otimes \Q \co Bor_{\s}(n) \otimes \Q \to
Bor_{\f}(n) \otimes \Q$ is injective.
%The rank of the fold bordism group 
%$Bor_{\f}(n+1,-1)$ is at least ${\mathrm {rank}}\Omega_n + \sum_{q = 0}^{n} {\mathrm {rank}}\Omega_q$.
%The inequality 
%${\mathrm {rank}}_{\Z_2} Bor_s(n+1,-1) \otimes \Z_2 \leq {\mathrm {rank}}_{\Z_2} Bor_{\f}(n+1,-1) \otimes \Z_2$
%holds for the $\Z_2$-ranks of the $\Z_2$-vector spaces $Bor_s(n+1,-1) \otimes \Z_2$ and
%$Bor_{\f}(n+1,-1) \otimes \Z_2$.
\end{cor}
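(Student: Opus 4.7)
The plan is to exploit the splitting of the simple fold bordism group and observe that after tensoring with $\Q$ the ``problematic'' summand vanishes, so that injectivity reduces to the previously established fact that the remaining summand embeds as a direct summand.

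First I would invoke Theorem~\ref{borizom} to write
\[
Bor_{\s}(n) \otimes \Q \;\cong\; \bigl(B\imm^{\ep^1}(n-1,1)\otimes\Q\bigr) \;\oplus\; \bigl(B\imm^{\ep^1\x\ga^1}(n-2,2)\otimes\Q\bigr),
\]
and then apply the intermediate observation already made in the text: the unoriented cobordism group $\imm^{\ep^1\x\ga^1}(n-2,2)\cong\pi_n(\Gamma_{\ep^1\x\ga^1})$ is $2$-primary and finite, so by standard arguments the reduced homology of $\Gamma_{\ep^1\x\ga^1}$ is also $2$-primary and finite; combined with the Conner--Floyd decomposition $\Omega_n(X)\otimes\Q=\bigoplus_{p+q=n}H_p(X;\Q)\otimes(\Omega_q\otimes\Q)$, this forces $B\imm^{\ep^1\x\ga^1}(n-2,2)\otimes\Q = 0$. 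Thus the second summand dies rationally and we obtain a canonical isomorphism
\[
Bor_{\s}(n)\otimes\Q \;\cong\; B\imm^{\ep^1}(n-1,1)\otimes\Q.
\]

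Next I would appeal to the statement, made immediately before the corollary, that $\phi_n^{Bor}$ maps the first direct summand $B\imm^{\ep^1}(n-1,1)\subset Bor_{\s}(n)$ isomorphically onto a direct summand (again named $B\imm^{\ep^1}(n-1,1)$) of $Bor_{\f}(n)$. Tensoring with $\Q$ preserves both the direct summand property and the isomorphism, so $\phi_n^{Bor}\otimes\Q$ factors as
\[
Bor_{\s}(n)\otimes\Q \;\twoheadrightarrow\; B\imm^{\ep^1}(n-1,1)\otimes\Q \;\hookrightarrow\; Bor_{\f}(n)\otimes\Q,
\]
where the first arrow is the isomorphism above and the second is an embedding onto a direct summand; the composition is therefore injective, which is the claim.

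The only real content is the vanishing $B\imm^{\ep^1\x\ga^1}(n-2,2)\otimes\Q=0$, and this is essentially read off from the preceding paragraph; no new computation is required. There is no serious obstacle: the result is a clean corollary of Theorem~\ref{borizom}, item (2) of (the bordism analogue of) Theorem~\ref{injdontes}, and the Conner--Floyd rational splitting. The mild subtlety worth stating carefully in the final write-up is that ``direct summand'' is preserved under $-\otimes\Q$, so that the map from the first summand really does remain a rational direct-summand inclusion after rationalisation.
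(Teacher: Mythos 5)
Your proof has a genuine error at its central step. You claim that $B\imm^{\ep^1\x\ga^1}(n-2,2)\otimes\Q = 0$, deducing this from the fact that $\Bar H_*(\Gamma_{\ep^1\x\ga^1};\Z)$ is $2$-primary torsion together with the Conner--Floyd splitting. But that splitting reads
\[
\Omega_n(X)\otimes\Q \;=\; \bigoplus_{p+q=n} H_p(X;\Q)\otimes(\Omega_q\otimes\Q),
\]
and torsion in \emph{reduced} homology only kills the summands with $p>0$. The $p=0$ term $H_0(\Gamma_{\ep^1\x\ga^1};\Q)\otimes\Omega_n \cong \Omega_n\otimes\Q$ survives, and $\Omega_n\otimes\Q$ is generally nonzero (e.g.\ $n=4$, where $\Omega_4\otimes\Q\cong\Q$). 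Indeed the paper explicitly records that $\Omega_n(\Gamma_{\ep^1\x\ga^1})\otimes\Q\cong\Omega_n\otimes\Q$, and the rank theorem stated immediately before the corollary ($\rank Bor_{\s}(n)=\rank\Omega_n+\sum_{q=0}^n\rank\Omega_q$) already shows the second summand contributes an extra $\rank\Omega_n$ rationally. So the second summand does not die, and your reduction to ``injectivity on the first summand alone'' is unsound.

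What is actually needed is to handle the second summand as well. By the bordism analogue of Theorem~\ref{injdontes}(3) stated just above, injectivity of $\phi_n^{Bor}\otimes\Q$ follows once $\ga_{n,2}^{Bor}\otimes\Q \co B\imm^{\ep^1\x\ga^1}(n-2,2)\otimes\Q \to B\imm^{\ga^1\x\ga^1}(n-2,2)\otimes\Q$ is shown to be injective (together with the fact that the $B\imm^{\ep^1}(n-1,1)$ summand maps isomorphically onto a direct summand of $Bor_{\f}(n)$). The same Conner--Floyd argument you used identifies both the source and the target with $\Omega_n\otimes\Q$ via the $p=0$ terms, and the forgetting map $\Gamma_{\ep^1\x\ga^1}\to\Gamma_{\ga^1\x\ga^1}$ is the identity on $H_0$ with $\Q$-coefficients (both classifying spaces being connected), so $\ga_{n,2}^{Bor}\otimes\Q$ is in fact an isomorphism. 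That is the missing step; without it the argument is incomplete.
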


\section{Appendix: Bundle structure of simple fold maps}\label{bundlestr}

%\subsection{Bundle structure of fold maps}

Let $\si \co s \to (-1,1)$ be a representative of the ``figure eight'' singular fiber as in Figure~\ref{simplesing}. 
Let $g_1$ be an auto-diffeomorphism of $s$ and $g_2$ 
be an auto-diffeomorphism of $(-1,1)$ such that
$g_2 \circ \si = \si \circ g_1$.
We call the pair $(g_1, g_2)$ an {\it automorphism} of the representative $\si$.
The {\it automorphism group} of $\si$ consists of this kind of pairs $(g_1, g_2)$.
Let $\AUT(\si)$ denote the automorphism group of $\si$. 
Hence $\AUT(\si)$ is a subgroup of the topological group $\DIFF(s) \x \DIFF((-1,1))$.

The surface $s$ is orientable, and
let $\AUT^O(\si)$ denote the group of automorphisms $(g_1, g_2)$ in $\AUT(\si)$
such that the diffeomorphisms $g_1$ and $g_2$ both preserve or both reverse the orientations
of the manifolds $s$ and $(-1,1)$.

Let $\chi \co M^{n+1} \to N^n$ be an oriented fold map, which is a family of the Morse function $\si$, i.e., 
we have the commutative diagram
\begin{center}
\begin{graph}(6,2)
\graphlinecolour{1}\grapharrowtype{2}
\textnode {A}(1,1.5){$M^{n+1}$}
\textnode {B}(3, 0){$S_{\chi}$}
\textnode {C}(5, 1.5){$N^n$}
\diredge {A}{B}[\graphlinecolour{0}]
\diredge {C}{B}[\graphlinecolour{0}]
\diredge {A}{C}[\graphlinecolour{0}]
\freetext (1.2, 0.6){$\pi_{M}$}
\freetext (3,1.8){$\chi$}
\freetext (4.6,0.6){$\pi_{N}$}
\end{graph}
\end{center}
where $S_\chi$ denotes the indefinite fold singular set of the fold map $\chi$, and the diagram 
gives us a ``bundle'' denoted by $\csi_\si$, i.e., the ``total space'' of $\csi_\si$ is the fiberwise map
$\chi$ between the total spaces of the bundles 
$\pi_{M} \co M^{n+1} \to S_{\chi}$ with fiber $s$
and $\pi_{N} \co N^n \to S_{\chi}$ with fiber $(-1,1)$, 
the ``base space'' of $\csi_\si$ is $S_{\chi}$,
and the ``fiber'' of $\csi_\si$ 
is right-left equivalent to the Morse function $\si$.

Then the following theorem can be proved by an argument similar to that in \cite{Szucs3}.

\begin{thm}\label{szingnyalloktriv}
The bundle $\csi_\si$ is a locally trivial bundle over 
$S_{\chi}$ with the Morse function $\si$ as fiber,
and with structure group $\AUT^O(\si)$. \qed
\end{thm}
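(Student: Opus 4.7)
The plan is to construct, over each small disk $D \subset S_\chi$, explicit trivializations $\pi_M^{-1}(D) \cong D \x s$ and $\pi_N^{-1}(D) \cong D \x (-1,1)$ that intertwine $\chi|_{\pi_M^{-1}(D)}$ with $\mathrm{id}_D \x \si$, and then to verify that the resulting transition functions land in $\AUT^O(\si)$. The two natural ingredients are the parametric version of the fold normal form along $S_\chi$ and Ehresmann's fibration theorem off $S_\chi$; these will be matched by a vector-field construction, essentially as in the parametric singularity-bundle arguments of \cite{Szucs3}.

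First, fix $p \in S_\chi$ and a small disk $D \subset S_\chi$ through $p$. The parametric fold normal form (available because $S_\chi$ is a smooth submanifold of $M$ and each fiber of $\chi|_{S_\chi}$ has the standard indefinite germ) furnishes coordinates $(t_1,\ldots,t_{n-1},u,v)$ on a tubular neighborhood $T \subset M$ of $\pi_M^{-1}(D) \cap S_\chi$ and corresponding coordinates $(t_1,\ldots,t_{n-1},w)$ on the image tube in $N$, such that $\chi(t,u,v) = (t, u^2 - v^2)$ and $(t_1,\ldots,t_{n-1})$ parametrize $D$. Thus $\chi|_T$ is already a product family over $D$ of the standard indefinite fold germ. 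Off $T$ the map $\chi$ is a submersion; working inside the fixed representative $\si$, so that the ends of $s$ over $\pm 1$ are sealed by a collar, the restriction $\pi_M|_{\pi_M^{-1}(D) \setminus T}$ is a proper submersion onto $D$, and Ehresmann's theorem produces a trivialization there.

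To combine the two pieces coherently I would pick a frame $X_1,\ldots,X_{n-1}$ on $D$ and build lifts $\tilde X_i$ on $\pi_M^{-1}(D)$ with $d\pi_M(\tilde X_i) = X_i$ and $d\pi_N(d\chi(\tilde X_i)) = X_i$, using the evident lift from the normal form on $T$, an Ehresmann horizontal lift off $T$, and a partition of unity on $D$ to interpolate in the overlap. Since both candidate lifts already project correctly under $\pi_M$ and their $\chi$-images project correctly under $\pi_N$, any convex combination retains both properties. The joint flow of $\{\tilde X_i\}$ then spreads the fiber over $p$ across $D$, giving the trivialization of $\pi_M$, and its $\chi$-image does the same for $\pi_N$, conjugating $\chi$ to $\mathrm{id}_D \x \si$ by construction. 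Transition functions between two such trivializations over overlapping disks $D_1, D_2$ are then smooth families of pairs $(g_1, g_2) \in \DIFF(s) \x \DIFF((-1,1))$ satisfying $g_2 \circ \si = \si \circ g_1$, i.e.\ families in $\AUT(\si)$; the oriented-fold hypothesis supplies a globally consistent orientation of the regular fibers of $\chi$ which must be preserved, forcing $g_1$ and $g_2$ to both preserve or both reverse orientations, and hence reducing the structure group to $\AUT^O(\si)$.

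The main obstacle is the non-compactness of $s$ (with ends over $\pm 1$), which obstructs a direct use of Ehresmann and could in principle cause the joint flow to escape the chosen representative. I would handle this exactly as in \cite{Szucs3}: perform the entire construction inside a slightly enlarged tube $T'$ and within the collared model $\si$, so that the horizontal lifts automatically respect the boundary, and the interpolation takes place in a compact region where both lifts are defined and project correctly. This is the same parametric trick used there to assemble the classifying space of a singularity class, and it transfers without modification to the indefinite-fold family $\csi_\si$.
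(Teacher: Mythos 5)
The paper supplies no proof of this theorem beyond the remark that it ``can be proved by an argument similar to that in \cite{Szucs3}''; your sketch is a correct elaboration of exactly that intended argument (parametric fold normal form over a disk in $S_\chi$, Ehresmann away from the singular locus using the collared model, interpolation of horizontal lifts by a partition of unity on $D$, and then reading off the structure group from the orientation condition on regular fibers). One point worth tightening: the condition you impose on the lifts, $d\pi_N(d\chi(\tilde X_i)) = X_i$, is necessary but by itself does not guarantee that $d\chi(\tilde X_i)$ is a genuine vector field on $N$ (i.e.\ constant along $\chi$-fibers), which is what you need for the joint flow on $M$ to push forward to a flow on $N$. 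The cleaner formulation is to first produce the frame $\tilde Y_i$ on the tube in $N$ with $d\pi_N(\tilde Y_i) = X_i$ (again by gluing the normal-form and Ehresmann pieces), and then lift each $\tilde Y_i$ to a $\chi$-related $\tilde X_i$ on $M$; since your partition of unity lives on $D \subset S_\chi$ and therefore pulls back to a function constant on $\chi$-fibers, the interpolation does preserve $\chi$-relatedness, so the construction you describe in fact works once phrased this way.
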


Moreover, using \cite{Jan, Wa} and giving an appropriate Riemannian metrics on $M^{n+1}$, we can prove the following.

\begin{thm}\label{strcsopveges}
The structure group of the bundle $\csi_\si$ can be reduced to the group
$\Z_2$,
where the non-trivial element of $\Z_2$ 
acts identically on $(-1,1)$ and acts on the source $s$ and on the fiber
``${\infty}$''
as a 180 degree rotation. \qed
\end{thm}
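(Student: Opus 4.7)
The plan is to follow the strategy of Janich--Wassermann: one exhibits a concrete Riemannian structure on the total space of $\csi_\si$ with respect to which the transition maps of Theorem~\ref{szingnyalloktriv} become isometries, and then reads off the resulting finite reduction of the structure group. First I would trivialize $\csi_\si$ locally over a cover $\{U_\al\}$ of $S_\chi$ by choosing smooth sections of critical points and using the fact that the ``figure eight'' fiber $\si \co s \to (-1,1)$ has a single indefinite critical point. In each such trivializing chart, the Morse lemma provides canonical local coordinates in which $\chi$ reads as $(x,y) \mapsto x^2 - y^2$ near the singular point and in which the map is standard near each regular fiber component as well.

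Next I would equip $M^{n+1}$ with a Riemannian metric that in each local chart restricts to the standard Euclidean metric on a neighbourhood of each critical point and, outside these neighbourhoods, is built by a partition of unity subordinate to $\{U_\al\}$ together with a chosen background metric on the model fiber. By the usual arguments of Janich and Wassermann (cited as \cite{Jan, Wa} in the text), the existence of such an invariant metric allows the structure group to be reduced from $\AUT^O(\si)$ to the subgroup of isometric automorphisms of the fiber that cover isometries of the target interval $(-1,1)$. This subgroup is finite because the indefinite critical point and the $\infty$-component of the singular fiber are rigid features on which a compact isometry group can act only through a finite quotient.

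Then I would enumerate this finite isometry group explicitly on the figure-eight fiber. The group of isometric automorphisms $(g_1,g_2)$ of $\si$ preserving both orientations is generated by the $180$-degree rotation of $s$ around the axis through the indefinite critical point; any other isometric symmetry either reverses the orientation of the level curves (so it does not lie in $\AUT^O(\si)$) or else fails to extend continuously across the singular fiber in a way compatible with the trivialization of the target line bundle (which is the condition imposed by orientedness combined with the simple/framed hypothesis, as already used in Section~\ref{foldgerms}). Consequently only the identity and the $180$-degree rotation survive, giving the claimed $\Z_2$.

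Finally I would verify that this generator acts as stated: the rotation fixes the axis through the indefinite critical point, hence acts identically on the interval $(-1,1)$ via $\pi_N$, and it permutes the two loops of the figure eight while fixing the crossing point, which is exactly the $180$-degree rotation of both $s$ and its ``$\infty$'' core. The main obstacle is the second step, namely producing the Janich--Wassermann-style invariant metric in a way that simultaneously extends across the critical locus and matches the chosen trivializations on the overlaps $U_\al \cap U_\beta$; once this metric is in place, the reduction of the structure group and the identification of the surviving $\Z_2$ are formal consequences of the local normal form for indefinite fold singularities.
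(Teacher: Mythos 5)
Your proposal follows essentially the same route as the paper's (very terse) proof: appeal to the J\"anich--Wall machinery \cite{Jan, Wa} to reduce the structure group of Theorem~\ref{szingnyalloktriv} to a compact subgroup by equipping $M^{n+1}$ with an appropriate Riemannian metric, then enumerate the finite group of orientation-preserving isometric automorphisms of the fiber $\si$. Two small cautions, though. First, the reference \cite{Wa} is to C.~T.~C.~Wall (``A second note on symmetry of singularities''), not Wassermann. Second, in the enumeration step the clean way to discard the two elements of $\Z_2\oplus\Z_2$ other than the identity and $(x,y)\mapsto(-x,-y)$ is to note that they cover $-\mathrm{id}_\R$ on the target, so a global automorphism of $\si$ inducing one of them would have to exchange the two sides of the critical value $0$; this is impossible because the figure-eight fiber has two regular circle components on one side and only one on the other. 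Your appeal to ``compatibility with the trivialization of the target line bundle'' instead is the argument the paper uses for \emph{framed} fold maps in Section~\ref{foldgerms}, which is not the hypothesis in force here; the non-extendability-across-the-figure-eight argument is the one that applies in the simple case.
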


%\end{spacing}

\begin{thebibliography}{AAA}

\bibitem{An2}
Y. Ando,
Fold-maps and the space of base point preserving maps of spheres,
J. Math. Kyoto Univ. {\bf 41} (2002), 693--737.


\bibitem{An4}
\bysame,
Cobordisms of maps without prescribed singularities,
arXiv:math.GT/0412234v1.




%\bibitem{An3}
%\bysame,
%Existence theorems of fold-maps,
%Japan. J. Math. {\bf 30} (2004), 29--73.


%\bibitem{An}
%\bysame,
%{Stable homotopy groups of spheres and higher singularities},
%J. Math. Kyoto Univ. {\bf 46} (2006), 147--165.


%\bibitem{GiWi}
%{C. G. Gibson, K. Wirthm\"uller, A. A. du Plessis and E. J. N. Looijenga},
%{Topological stability of smooth mappings}, 
%Lect. Notes in Math. {\bf 552}, Springer-Verlag, 1976.

\bibitem{CoFl}
P. E. Conner and E. E. Floyd,
Differentiable periodic maps,
Springer-Verlag, 1964.


%\bibitem{EkSzuTer}
%T. Ekholm, A. Sz\H{u}cs and T. Terpai,
%Cobordisms of fold maps and maps with prescribed number of cusps,
%arXiv:math.GT/0701433v1.



%\bibitem{Eli1}
%J. M. Eliashberg,
%On singularities of folding type, 
%Math. USSR-Izv. {\bf 4} (1970), 1119--1134.

%\bibitem{Eli2}
%\bysame,
%Surgery of singularities of smooth mappings,
%Math. USSR-Izv. {\bf 6} (1972), 1302--1326.

\bibitem{CoTh}
F. Constantino and D. P. Thurston,
3-manifolds efficiently bound 4-manifolds, arXiv:math/0506577v3 [math.GT].

%\bibitem{Gro}
%M. Gromov,
%Stable mappings of foliations into manifolds,
%Math. USSR-Izv. {\bf 3} (1969), 671--694.


\bibitem{Hu}
J. F. Hughes,
Bordism and regular homotopy of low-dimensional immersions,
Pacific J. Math. {\bf 156} (1992), 155--184.

\bibitem{Ik}
{K. Ikegami},
{Cobordism group of Morse functions on manifolds},
Hiroshima Math. J. {\bf 34} (2004), 211--230.


\bibitem{IS}
{K. Ikegami and O. Saeki},
{Cobordism group of Morse functions on surfaces}, J. Math. Soc. Japan {\bf 55}
(2003), 1081--1094.


\bibitem{Jan}
{ K. J\"anich},
{Symmetry properties of singularities of $C^\infty$-functions}, Math. Ann.
{\bf 238} (1978), 147--156. 


\bibitem{Kal}
{B. Kalm\'ar},
{Cobordism group of Morse functions on unoriented surfaces},
Kyushu J. Math. {\bf 59} (2005), 351--363.


%\bibitem{Kal3}
%\bysame,
%Pontrjagin-Thom construction for singular maps with negative codimension,
%MSc degree-thesis, 2005, arXiv:0802.0332.


%\bibitem{Kal5}
%\bysame,
%Pontryagin-Thom type construction for negative codimensional singular maps,
%arXiv:math.GT/0612116v1.


\bibitem{Kal2}
\bysame,
{Cobordism group of fold maps of oriented 3-manifolds into the plane}, 
Acta Math. Hungar. {\bf 117} (2007), 1--25.


\bibitem{Kal7}
\bysame,
Fold maps, their geometric invariants, and computations of cobordism groups, PhD thesis, 2008 April.

\bibitem{Kal4}
\bysame,
Fold cobordisms and stable homotopy groups, to appear in Studia Scient. Math. Hungar.,
arXiv:0704.3147.


\bibitem{Kal6}
\bysame,
Cobordism invariants of fold maps, to appear in Contemporary Mathematics, 
arXiv:0709.0350.


\bibitem{Kal8}
\bysame,
Cobordism of fold maps, stably framed manifolds and immersions, arXiv:math:0803.1666.


\bibitem{Ko}
{U. Koschorke},
{Vector fields and other vector bundle morphisms -- a singularity approach},
Lect. Notes in Math. {\bf 847}, Springer-Verlag, 1981.  

%\bibitem{KuLe}
%{L. Kushner, H. Levine and P. Porto, Mapping three-manifolds into the plane I},
%Bol. Soc. Mat. Mexicana {\bf 29} (1984), 11--33.


\bibitem{Lev1}
H. Levine,
Elimination of cusps, 
Topology {\bf 3} (1965) suppl. 2, 263--296.


\bibitem{Lev}
\bysame,
{Classifying immersions into $\R^4$ over stable maps of 3-manifolds into
$\R^2$}, Lect. Notes in Math. {\bf 1157}, Springer-Verlag, 1985.


\bibitem{Liu}
A. Liulevicius,  
A theorem in homological algebra and stable homotopy of projective spaces,
Trans. Amer. Math. Soc. {\bf 109} (1963), 540--552.



%\bibitem{MaLo}
%{W. Motta, P. Porto and O. Saeki},
%{Stable maps of 3-manifolds into the plane and their quotient spaces},
%Proc. London Math. Soc. {\bf 71} (1995), 158--174.



\bibitem{RSz}
{R. Rim\'anyi and A. Sz\H{u}cs},
{Generalized Pontrjagin-Thom construction for maps with singularities}, 
Topology {\bf 37} (1998), 1177--1191.


%\bibitem{Ro}
%V. Rohlin,
%Doklady Akad. Nauk SSSR (N.S.) {\bf 89} (1953), 789--792.



%\bibitem{Sad}
%R. Sadykov,
%Elimination of singularities of smooth mappings of 4-manifolds into 3-manifolds,
%Topology Appl. {\bf 144} (2004), no. 1--3, 173--199.

\bibitem{Sad1}
%\bysame,
R. Sadykov,
Bordism groups of special generic mappings,
Proc. Amer. Math. Soc. {\bf 133} (2005), 931--936. 


\bibitem{Sad2}
\bysame,
Bordism groups of solutions to differential relations,
arXiv:math.AT/0608460v1.

\bibitem{Sad3}
\bysame,
Fold maps, framed immersions and smooth structures, arXiv:0803.3780v1.


%\bibitem{Sad3}
%\bysame,
%Cobordism groups of Morin maps,
%preprint.


\bibitem{Sa1}
{O. Saeki},
Notes on the topology of folds,
J. Math. Soc. Japan {\bf 44} (1992), 551--566.


\bibitem{Sa2}
\bysame,
Simple stable maps of $3$-manifolds into surfaces. II,
J. Fac. Sci. Univ. Tokyo Sect. IA Math. {\bf 40} (1993), 73--124. 


%\bibitem{Sa3}
%\bysame,
%Stable maps and links in $3$-manifolds,
%Workshop on Geometry and Topology (Hanoi, 1993), Kodai Math. J. {\bf 17} (1994), 518--529.


\bibitem{Sa4}
\bysame,
Simple stable maps of $3$-manifolds into surfaces,
Topology {\bf 35} (1996), 671--698.

\bibitem{Saspecgen}
\bysame,
Cobordism groups of special generic functions and 
groups of homotopy spheres,
Japan. J. Math. (N.S.) {\bf 28} (2002), 287--297.

%\bibitem{Sa5}
%\bysame,
%Fold maps on 4-manifolds,
%Comment. Math. Helv. {\bf 78} (2003), no. 3, 627--647.


\bibitem{Sa}
\bysame,
{Topology of singular fibers of differentiable maps},
Lect. Notes in Math. {\bf 1854}, Springer-Verlag, 2004.


\bibitem{Sa6}
\bysame,
Cobordism of Morse functions on surfaces, universal complex of singular fibers, and their application to map germs, Algebraic and Geometric Topology {\bf 6} (2006), 539--572.

%\bibitem{SaYa}
%{O. Saeki and T. Yamamoto},
%{Singular fibers of stable maps and signatures of 4-manifolds},
%Geom. Topol. {\bf 10} (2006), 359--399.


\bibitem{Saku}
K. Sakuma,
On the toplogy of simple fold maps,
Tokyo J. Math. {\bf 17} (1994), 21--31.


\bibitem{Sch}
P. A. Schweitzer,
Joint cobordism of immersions, in ``The Steenrod Algebra and its Applications (Proc. Conf. to
Celebrate N. E. Steenrod's Sixtieth Birthday, Battelle Memorial Inst., Columbus, Ohio, 1970)'', 
pp. 267--282, Lect. Notes in Math. {\bf 168}, Springer, Berlin.

%\bibitem{SzaSzu}
%E. Szab\'o and A. Sz\H{u}cs,
%....

\bibitem{Szucs1}
{A. Sz\H{u}cs},
{Analogue of the Thom space for mappings with singularity of type $\Sigma^1$} 
(in Russian),
Math. Sb. (N.S.) {\bf 108 (150)} (1979), 433--456, 478;
English translation: Math. USSR-Sb. {\bf 36} (1980), 405--426.


\bibitem{Sz1}
\bysame,
{Cobordism groups of immersions with restricted self-intersection}, Osaka
J. Math. {\bf 21} (1984), 71--80.


\bibitem{Szucs3}
\bysame,
Universal singular map, Topology. Theory and applications, II (P\'ecs, 1989),
Colloq. Math. Soc. J\'anos Bolyai, 55, North-Holland, Amsterdam, 1993,
491--500.




%\bibitem{Sz2}
%\bysame,
%{Topology of $\Si^{1,1}$-singular maps}, Math. Proc. Camb. Phil. Soc.
%{\bf 121} (1997), 465--477.



%\bibitem{Szucs2}
%\bysame,
%{On the cobordism group of Morin maps},
%Acta Math. Hungar. {\bf 80} (1998), 191--209.



\bibitem{Sz3}
\bysame,
{Elimination of singularities by cobordism}, Contemporary Mathematics
{\bf 354} (2004), 301--324.

\bibitem{Szucs4}
\bysame,
Cobordism of singular maps,
arXiv:math.GT/0612152v1.



\bibitem{Wa}
C. T. C. Wall,
A second note on symmetry of singularities,
Bull. London Math. Soc. {\bf 12} (1980), 347--354.


\bibitem{We}
{R. Wells},
{Cobordism of immersions}, Topology {\bf 5} (1966), 281--294.


%\bibitem{Ya}
%{T. Yamamoto},
%{Classification of singular fibres of stable maps from 4-manifolds to 3-manifolds and its applications},
%J. Math. Soc. Japan {\bf 58} (2006), 721--742.


\bibitem{Yo}
{Y. Yonebayashi},
Note on simple stable maps of $3$-manifolds into surfaces,
Osaka J. Math. {\bf 36} (1999), 685--709.

\end{thebibliography}
\end{document}